\def\thm@space@setup{%
  \thm@preskip=\parskip \thm@postskip=0pt
}
\def\qed{\hfill\ifhmode\unskip\nobreak\fi\quad\ifmmode\Box\else\hfill$\Box$\fi}
\def\ite#1{\hfill\break${}$\hbox to 50pt {\quad(#1)\hfill}}
\newtheorem{thm}{Theorem}[section]
\newtheorem{definition}{Definition}
\newtheorem{lem}[thm]{Lemma}
\newtheorem{conjecture}{Conjecture}
\newtheorem{prop}{Proposition}[section]
\def\ex{{\rm{ex}}}
\def\c{{\circlearrowright}}
\newcommand{\vb}[1]{\boldsymbol{#1}}
\newcommand{\cir}{\circlearrowright}
\begin{document}

\pagestyle{myheadings}
\markright{{\small{\sc F\"uredi, Jiang, Kostochka, Mubayi, and Verstra\"ete:   Crossing paths
}}}

\title{\vspace{-1in} Extremal problems for convex geometric hypergraphs
	and ordered hypergraphs}

\author{
\hspace{0.8in} Zolt\'an F\" uredi\thanks{Research supported by grant K116769
from the National Research, Development and Innovation Office NKFIH and
by the Simons Foundation Collaboration grant \#317487.}
\and
Tao Jiang\thanks{Research partially supported by National Science Foundation award DMS-1400249.}
\and
Alexandr Kostochka\thanks{Research  supported in part by NSF grant
 DMS-1600592, by Award RB17164 of the UIUC Campus Research Board and by grants 18-01-00353A and 19-01-00682
  of the Russian Foundation for Basic Research.
} \hspace{0.8in} \smallskip \and
Dhruv Mubayi\thanks{Research partially supported by NSF awards DMS-1300138 and 1763317.} \and Jacques Verstra\"ete\thanks{Research supported by NSF award DMS-1556524.}
}

\maketitle

\vspace{-0.5in}

\begin{abstract}
An ordered hypergraph is a hypergraph whose vertex set is linearly ordered, and a convex geometric hypergraph is
a hypergraph whose vertex set is cyclically ordered. Extremal problems for ordered and convex geometric graphs have a rich history with applications to a variety of problems in combinatorial geometry. In this paper, we consider analogous extremal problems for uniform hypergraphs, and determine the order of magnitude
of the extremal function for various ordered and convex geometric paths and matchings.
Our results  generalize earlier works of Bra{\ss}-K\'{a}rolyi-Valtr, Capoyleas-Pach and Aronov-Dujmovi\v{c}-Morin-Ooms-da Silveira. We also provide a new generalization of the Erd\H os-Ko-Rado theorem in the ordered setting.
\end{abstract}

\section{Introduction}

An {\em ordered graph} is a graph together with a linear ordering of its vertex set. Extremal problems for ordered graphs have a long history, and were studied extensively in papers by Pach and Tardos~\cite{PT}, Tardos~\cite{T} and Kor\'{a}ndi, Tardos, Tomon and Weidert~\cite{KTTW}. Let $\ex_{\rightarrow}(n,F)$ denote the maximum number of edges in an $n$-vertex ordered graph that does not contain the ordered graph $F$. This extremal problem is phrased in~\cite{KTTW} in terms of pattern-avoiding matrices. Marcus and Tardos~\cite{MT} showed that if the forbidden pattern is
a permutation matrix, then the answer is in fact linear in $n$, and thereby solved the Stanley-Wilf Conjecture,
as well as a number of other well-known open problems. A central open problem in the area was posed by
Pach and Tardos~\cite{PT}, in the form of the following conjecture. An ordered graph has  interval chromatic  number two if it is bipartite with bipartition $A \cup B$ and $A$ precedes $B$ in the ordering of the vertices.

\begin{conjecture}\label{ptc}
Let $F$ be an ordered acyclic graph with interval chromatic number two. Then $\ex_{\rightarrow}(n,F) = O(n\cdot \mbox{\rm polylog} \,n)$.
\end{conjecture}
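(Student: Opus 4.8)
The plan is complicated by the fact that the statement above is the celebrated Pach--Tardos conjecture, which is open in full generality; so rather than a complete argument I can only describe the natural line of attack and indicate precisely where it stalls. First I would reformulate the problem via forbidden $0$--$1$ submatrices. Since $F$ has interval chromatic number two, with parts $A \prec B$, any embedding of $F$ into an ordered graph is determined by its bipartite incidence pattern; the goal is then to bound the number of edges of an arbitrary $F$-free ordered graph $G$ on $[n]$, and one tries to reduce this to a pure pattern-avoidance count in the spirit of Marcus--Tardos.

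The first reduction is a divide-and-conquer on the vertex set. Splitting $[n]$ into two intervals $I_1 \prec I_2$ of size $n/2$, every edge of $G$ lies inside $I_1$, inside $I_2$, or crosses between them; a copy of $F$ living entirely in the crossing part is still a copy of $F$ in $G$, so the crossing edges form an ordered bipartite $F$-free graph with $I_1 \prec I_2$. Writing $g(n)$ for the maximum number of edges in such a bipartite configuration, one gets a recursion of the shape $f(n) \le 2 f(n/2) + g(n)$, so that $f(n) = O(g(n)\log n)$ as soon as $g$ is near-linear. Thus the whole problem reduces to the bipartite, interval-chromatic-two case, which is the heart of the matter.

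The decisive step is to bound $g(n)$ for the bipartite pattern, and here I would exploit the acyclicity of $F$. Encoding the bipartite graph as a $0$--$1$ matrix, the hope is that because $F$ is a forest one can peel off a leaf of $F$ and run an inductive column-merging argument, charging edges to a near-linear number of ``wide'' blocks so that each forbidden leaf-extension is controlled by the tree structure. For permutation-type patterns and a few structured families this machinery does deliver linear or $O(n\log n)$ bounds, and it is what I would push on first.

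The hard part---and the reason this remains a conjecture rather than a theorem---is exactly this bipartite bound: no known argument converts acyclicity of an \emph{arbitrary} interval-chromatic-two forest $F$ into a polylogarithmic factor. Leaf-removal and the Marcus--Tardos framework handle only special cases, while for a general such $F$ the best available general bounds are superpolylogarithmic, and closing the gap to $O(n\cdot \mathrm{polylog}\,n)$ is the central open obstacle. My honest expectation is therefore that the approach above reduces the conjecture cleanly to the bipartite case but cannot, with current techniques, resolve it.
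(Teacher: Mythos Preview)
Your assessment is correct: the statement is presented in the paper as a conjecture (the Pach--Tardos conjecture), not as a theorem, and the paper offers no proof of it. There is therefore nothing to compare your argument against.

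Your write-up is also substantively accurate. The divide-and-conquer reduction you describe, yielding $f(n)=O(g(n)\log n)$ from the bipartite bound $g(n)$, is exactly the Pach--Tardos reduction the paper invokes later (and generalizes to $r$-graphs in Theorem~\ref{splitting1}): an $n$-vertex ordered graph decomposes into at most $\lceil\log n\rceil$ edge-disjoint subgraphs whose components have interval chromatic number two. Your identification of the bipartite pattern-avoidance bound as the genuine obstruction, and of leaf-peeling/Marcus--Tardos style arguments as handling only special cases, matches the current state of the problem. So your ``proof proposal'' is not a proof, but it is an honest and accurate account of why none exists.
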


In support of Conjecture A, Kor\'{a}ndi, Tardos, Tomon and Weidert~\cite{KTTW} proved for a wide class of forests $F$ that
$\ex_{\rightarrow}(n,F) = n^{1 + o(1)}$. This conjecture is related to a question of Bra{\ss} in the context of
convex geometric graphs.

A {\em convex geometric (cg) graph} is a graph together with a cyclic ordering of its vertex set. Given a convex geometric graph $F$, let $\ex_{\circlearrowright}(n,F)$ denote the maximum number of edges in an $n$-vertex convex geometric graph that does not contain $F$. Extremal problems for geometric graphs have a fairly long history, going back to theorems on disjoint line segments~\cite{Hopf-Pannwitz,Sutherland,Kupitz-Perles}, and more recent results on crossing matchings~\cite{Brass-Karolyi-Valtr,Capoyleas-Pach}. Motivated by the famous Erd\H{o}s unit distance problem, the first author~\cite{Furedi} showed that the maximum number of unit
distances between points of a convex $n$-gon is $O(n\log n)$. In the vein of Conjecture \ref{ptc},
Bra{\ss}~\cite{Brass} asked for the determination of all acyclic graphs $F$ such that $\ex_{\circlearrowright}(n,F)$ is linear in $n$, and this problem remains open (recently it was solved for trees~\cite{FKMV}).

In this paper, we study extremal problems for ordered and convex geometric uniform hypergraphs. An {\em ordered} ({\em convex geometric}) $r$-graph is an $r$-uniform hypergraph whose vertex set is linearly (cyclically) ordered. Although the theory of cg (hyper)graphs can be studied independently of any geometric context,  extremal problems for both cg graphs and  hypergraphs are frequently motivated by problems in discrete geometry~\cite{Brass-Rote-Swanepoel, Pach-Pinchasi,Brass,Aronov}. Instances of the extremal problem for two disjoint triangles in the convex geometric setting are connected to the well-known triangle-removal problem~\cite{Gowers-Long}. In~\cite{FJKMV} we show that certain types of paths in the convex geometric setting give the current best bounds for the notorious extremal problem for tight paths in uniform hypergraphs.

One of the goals of this paper is to  study extremal problems simultaneously in the ordered and cg settings and compare and contrast their behaviors.

\section{Results}
We denote by $\ex_{\rightarrow}(n,F)$ ($\ex_{\circlearrowright}(n,F)$) the maximum number of edges in an $n$-vertex ordered (cg) $r$-graph that does not contain $F$, and let $\ex(n,F)$ denote the usual (unordered) extremal function.
Let $P$ be the linearly ordered path with three edges with ordered vertex set $1<2<3<4$ and edge set   $\{13, 32, 24\}$. In the convex geometric setting we use $P$ to denote the unique  cg graph isomorphic to the path with three edges where the edges $13$ and $24$ cross. We then have
\begin{equation} \label{=}\ex_{\to}(n, P) = 2n - 3=\ex_{\cir}(n,P) \qquad \hbox{ for $n \ge 3$ }\end{equation}
where the former is a folklore result and the latter is due to Bra{\ss}, K\'{a}rolyi and Valtr~\cite{Brass-Karolyi-Valtr}.
To our knowledge,  (\ref{=}) are the only known nontrivial exact results for connected ordered or convex geometric graphs
that  have crossings in their embedding.
 These two simple exact results therefore provide a good launchpad for further investigation  in the hypergraph case. This is the direction we take, extending (\ref{=}) to longer paths and to the hypergraph setting.
  In the process, we will also discover some subtle differences between the  ordered and convex geometric cases which are not visible in (\ref{=}).

There are many ways to extend the definition of a path to hypergraphs and we choose one of the most natural ones, namely tight paths. There are also many possibilities for the ordering of the vertices of the path  and again we make a rather natural choice, namely crossing paths which are defined below (a similar notion was studied by Capoyleas and Pach~\cite{Capoyleas-Pach} who considered the corresponding question for matchings in a cg graph).

A {\em tight $k$-path} is an $r$-graph whose edges have the form
$\{v_i,v_{i + 1},\dots,v_{i + r - 1}\}$ for $0 \leq i < k$. Typically, we list the vertices $v_0v_1\dots v_{k+r-2}$ in a tight $k$-path.
We let $<$ denote the underlying ordering of the vertices of an ordered  hypergraph. In the case of convex geometric hypergraphs, we slightly abuse the same notation so that
 $u_1<u_2<\cdots < u_{\ell}$ is shorthand for $u_1<u_2<\cdots < u_{\ell} < u_1$ which  means that   moving clockwise in the cyclic ordering of the vertices from $u_1$ we first encounter $u_2$, then  $u_3$, and so on until we finally encounter $u_{\ell}$ and then $u_1$ again. In other words, $u_1, \ldots, u_{\ell}$ is a cyclic interval where the vertices are listed in clockwise order. When needed, we use the notation $\vb{\Omega}_n$ to denote the vertex set of a generic $n$-vertex convex geometric hypergraph, with the clockwise ordering of the vertices.

\begin{definition} [Crossing paths in ordered and convex geometric hypergraphs] \label{defCP} An {\em $r$-uniform crossing $k$-path} $P_k^r$ in an ordered or convex geometric hypergraph is a tight $k$-path $v_0v_1\dots v_{r+k-2}$
with the ordering
\vspace{-0.1in}
\begin{center}
\begin{tabular}{lp{5.8in}}
{\rm (i)} & $v_0 < v_1 < v_2 < \dots < v_{r-1}$,   \\
{\rm (ii)} &  $v_j  <  v_{j+r}  < v_{j + 2r}  <  \cdots <  v_{j+1}$ for $j<r-1$ and\\
{\rm (iii)} & $v_0 < v_{r-1} < v_{2r-1} < v_{3r-1}<\cdots <v_{\left\lfloor \frac{r+k-2}{r}\right\rfloor r -1}$.
\end{tabular}
\end{center}
\end{definition}

An  ordered $P_5^2$  (Figure 1) and  a convex geometric  $P_7^2$ and $P_5^3$  (Figure 2) are shown below.

\begin{figure}[!ht]
	\begin{center}
		\includegraphics[width=3.5in]{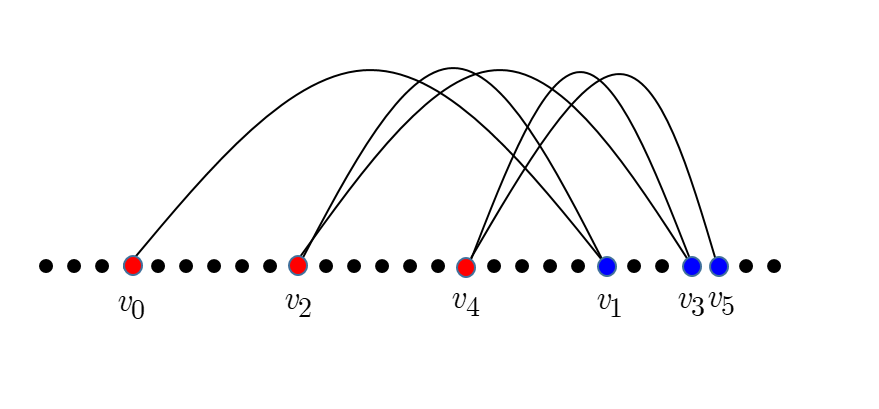}
		\caption{Ordered $P_5^2$}
		\label{fig:pathcrossers}
	\end{center}
\end{figure}

\begin{figure}[!ht]
\begin{center}
\includegraphics[width=3.5in]{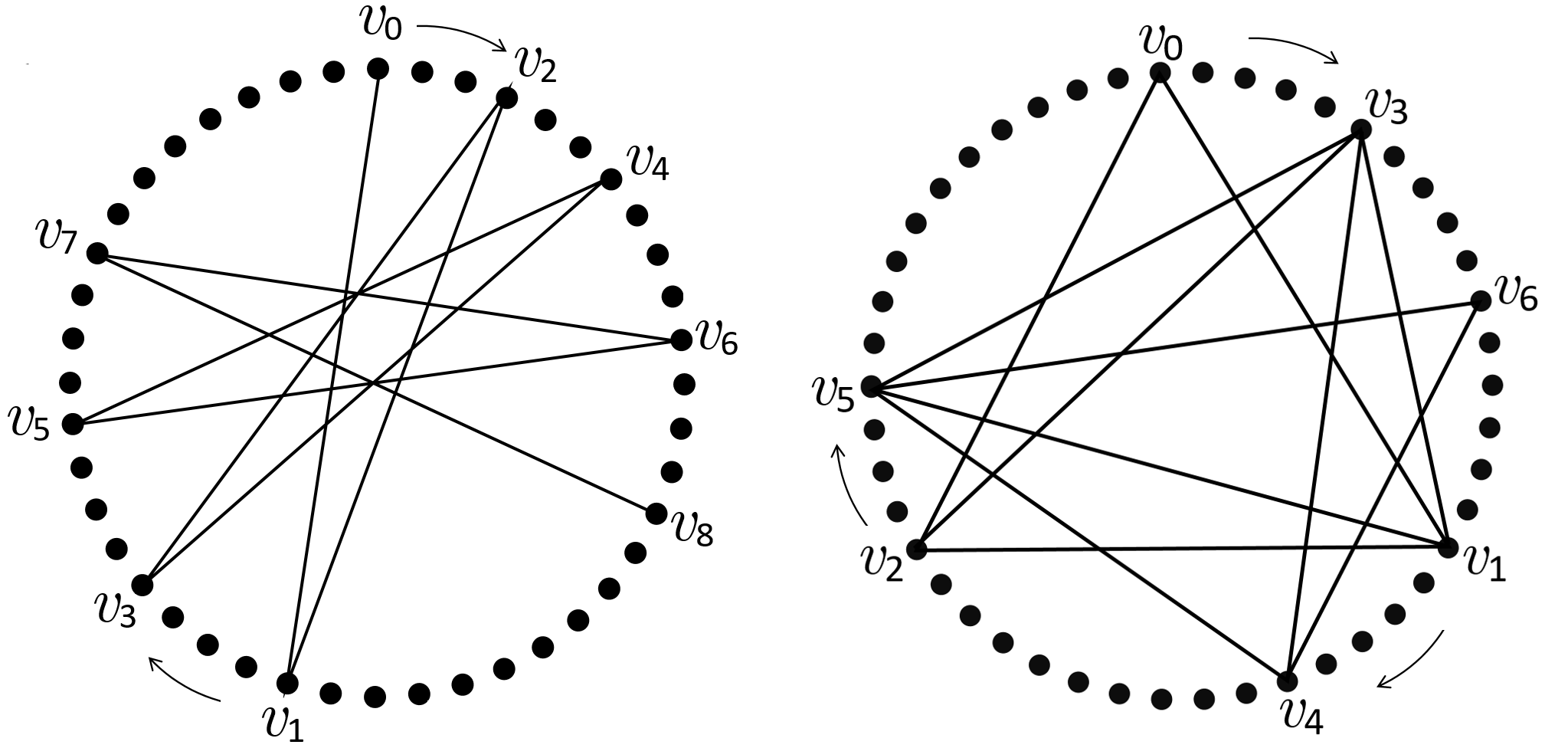}
\caption{Convex Geometric $P_7^2$ and $P_5^3$}
\label{fig:crossers}
\end{center}
\end{figure}
Our first result  generalizes $\ex_{\to}(n, P_3^2) = 2n - 3$ to larger $k$ and $r$.

\begin{thm}\label{cpthm}
Fix $k \geq 1$, $r \geq 2$ and let  $n\geq r+k$. Then
$$\ex_{\to}(n, P^r_k)=  \begin{cases}
{n \choose r} - {n-k+1 \choose r} &\mbox{ for } k \leq r + 1  \\
\Theta(n^{r - 1}\log n) & \mbox{ for }k \geq r + 2.
\end{cases}$$

\end{thm}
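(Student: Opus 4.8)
The plan is to treat the two regimes separately, but to start from a single structural description of $P^r_k$ that explains the dichotomy. Writing each vertex as $v_m$ with $m=qr+s$, $0\le s<r$, conditions (i)--(iii) force the underlying order to sort the vertices first by the residue $s$ and then by the index $m$; consequently every edge $e_i=\{v_i,\dots,v_{i+r-1}\}$ meets each residue class exactly once, and the passage from $e_i$ to $e_{i+1}$ increases precisely the coordinate of residue $i \bmod r$ (a ``round-robin staircase''). In particular the largest coordinate (residue $r-1$) is constant along the whole path when $k\le r$, changes exactly once when $k=r+1$, and changes at least twice when $k\ge r+2$. This trichotomy is the reason the extremal function jumps from $\Theta(n^{r-1})$ to $\Theta(n^{r-1}\log n)$ at $k=r+2$, and it guides both halves of the argument.

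For $k\le r$ I would induct on $r$ via the \emph{top link}. For $t\in[n]$ let $L_t=\{S\setminus\{t\}: S\in\cF,\ \max S=t\}$, an $(r-1)$-graph on $[t-1]$. Because the top coordinate of $P^r_k$ is constant, deleting it turns $P^r_k$ into $P^{r-1}_k$ and conversely; hence $\cF$ is $P^r_k$-free if and only if every $L_t$ is $P^{r-1}_k$-free, giving $\ex_{\to}(n,P^r_k)=\sum_{t}\ex_{\to}(t-1,P^{r-1}_k)$ with equality realized by stacking optimal links. Feeding in the inductive value $\binom{m}{r-1}-\binom{m-k+1}{r-1}$ and summing with the hockey-stick identity yields exactly $\binom{n}{r}-\binom{n-k+1}{r}$. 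The recursion bottoms out at $r=k-1$, i.e.\ at the boundary case $k=r+1$, which must be handled on its own: here the top changes once, so the link lemma fails and I would argue directly. After relabelling its $2r$ vertices, $P^{r}_{r+1}$ is the monotone family of transversals of the $r$ consecutive pairs $\{1,2\},\{3,4\},\dots$, and avoiding it should be shown (by a compression argument preserving $P^r_{r+1}$-freeness) to force at least $\binom{n-r}{r}$ of the $r$-sets to be missing, matching the construction. I expect this boundary count to be the most delicate part of the exact range; note that the naive ``all $r$-sets meeting $[k-1]$'' family has the right \emph{size} but is \emph{not} extremal (it already contains $P^2_3$), so even the construction requires the staircase description.

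For $k\ge r+2$ I would first observe that a sub-path argument gives $P^r_{r+2}\subseteq P^r_k$, so a $P^r_{r+2}$-free family is automatically $P^r_k$-free; it therefore suffices to prove the lower bound only for $k=r+2$ and the upper bound for every $k$. The lower bound is a recursive halving construction: split $[n]$ into two intervals, place the construction on each half, and add a crossing layer of $\Theta(n^{r-1})$ edges between the halves chosen so that no copy of $P^r_{r+2}$ uses edges from two different scales; the $\log n$ factor then accrues from the $\log n$ levels of the recursion, exactly as in F\"uredi's $O(n\log n)$ unit-distance bound. The upper bound runs the same recursion in reverse: with $f(n)=\ex_{\to}(n,P^r_k)$ and a split $[n]=L\cup R$ into halves, $f(n)\le 2f(n/2)+X(n)$, where $X(n)$ bounds the number of edges meeting both $L$ and $R$; solving the recurrence gives $f(n)=O(n^{r-1}\log n)$ provided $X(n)=O_k(n^{r-1})$.

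The key lemma, and the step I expect to be the main obstacle, is precisely this crossing bound: a \emph{separated} $P^r_k$-free $r$-graph, all of whose edges cross a fixed split $L\mid R$, has only $O_k(n^{r-1})$ edges — one factor below the trivial $n^{r}$, and crucially with \emph{no} extra logarithm, since otherwise the recursion would give $n^{r-1}\log^2 n$. I would reduce it by induction on $r$ to the graph case $r=2$, where the crossing edges form an ordered bipartite graph on $L\mid R$ and any copy of $P^2_k$ is forced to be aligned (its lower residue class in $L$, its upper one in $R$), so the forbidden configuration becomes the monotone alternating staircase in the biadjacency matrix. Proving that avoiding this staircase forces a \emph{linear} number of edges — via a Dilworth / longest-increasing-staircase decomposition sorting the edges into $O(k)$ classes each of size $O(n)$ — is the technical heart; making the single logarithm appear on both sides, matched, hinges entirely on obtaining this linear rather than merely near-linear estimate.
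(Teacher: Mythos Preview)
Your top-link reduction for $k\le r$ is correct and is genuinely different from the paper's proof. The paper does not fix the maximum vertex; instead it glues the two \emph{smallest} vertices $1,2$ into a single vertex and obtains the recurrence
\[
\ex_{\to}(n,P^r_k)\le \binom{n-2}{r-2}+\ex_{\to}(n-2,P^{r-1}_{k-1})+\ex_{\to}(n-1,P^r_k),
\]
which lowers $k$ and $r$ simultaneously and bottoms out at the trivial case $k=1$. Your identity $\ex_{\to}(n,P^r_k)=\sum_{t}\ex_{\to}(t-1,P^{r-1}_k)$ is cleaner, but it bottoms out at $k=r+1$, and there your ``compression argument'' is only a hope, not a proof; this is a real gap. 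The paper's extremal construction for $k\le r+1$ is the family of $r$-sets having two consecutive elements among their first $k-1$ coordinates (together with all sets ending at $n$ when $k=r+1$), and your remark that the naive ``hit $[k-1]$'' family already contains $P_3^2$ is well taken.

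The serious error is in the $k\ge r+2$ regime. Your divide-and-conquer is calibrated to $r=2$ and simply fails for $r\ge 3$: the recurrence $f(n)\le 2f(n/2)+O(n^{r-1})$ solves to $f(n)=O(n^{r-1})$, not $O(n^{r-1}\log n)$, because $\sum_{j\ge 0}2^{j}(n/2^{j})^{r-1}=n^{r-1}\sum_{j\ge 0}2^{(2-r)j}$ is a convergent geometric series once $r\ge 3$. Since the true answer is $\Theta(n^{r-1}\log n)$, your ``key lemma'' that the crossing edges satisfy $X(n)=O_k(n^{r-1})$ is \emph{false} for $r\ge 3$; indeed, in the paper's lower-bound family a constant fraction of the $\Omega(n^{r-1}\log n)$ edges cross the midpoint. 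The same arithmetic kills your recursive-halving lower bound: adding $\Theta(m^{r-1})$ edges at each of $\log n$ scales contributes $\sum_j 2^{j}(n/2^{j})^{r-1}=O(n^{r-1})$ in total when $r\ge 3$, so no logarithm accrues.

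What the paper actually does for $k\ge r+2$ is quite different. For the upper bound it invokes a separate ``splitting'' theorem to pass, at the cost of a single $\log n$ factor, from an arbitrary ordered $r$-graph to one of interval chromatic number $r$ (an $r$-partition into $r$ consecutive blocks, not a single bipartition), and then shows by a one-line greedy/marking argument that any $P^r_k$-free $r$-partite ordered $r$-graph has at most $k\prod n_i\sum 1/n_i=O(n^{r-1})$ edges. For the lower bound it takes all $r$-sets $a_1<\cdots<a_r$ with $a_2-a_1$ a power of $2$; the obstruction is arithmetic (three specific gap-differences $d_1,d_3<d_2<d_1+d_3$ cannot all be powers of $2$), not a scale-separation argument.
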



Our second theorem generalizes the Bra{\ss}, K\'{a}rolyi and Valtr~\cite{Brass-Karolyi-Valtr} result $\ex_{\cir}(n, P_3^2) = 2n - 3$ to larger $k$ and $r$.

\begin{thm}\label{cgthm}
Fix $k \geq 1$, $r \geq 2$ and let $n \ge 2r+1$. Then
  $$ \ex_{\circlearrowright}(n, P^r_k) = \begin{cases}
  \Theta(n^{r - 1}) & \mbox{ for } 3 \le k \leq 2r-1 \\
  {n \choose r} - {n - r \choose r} & \mbox{ for }k = r + 1 \\
  \Theta(n^{r - 1}\log n) & \mbox{ for }k \geq 2r.
  \end{cases}$$

  \end{thm}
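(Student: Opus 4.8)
The plan is to derive all three regimes of Theorem~\ref{cgthm} from the ordered result Theorem~\ref{cpthm} together with a small number of convex geometric constructions, isolating the one genuinely cyclic phenomenon---the emergence of the $\log n$ factor exactly at $k=2r$---as the crux. The first ingredient is the elementary \emph{cutting} reduction $\ex_\cir(n,P_k^r)\le \ex_\to(n,P_k^r)$: fix the gap between the last and first vertex and read the cyclic order as a linear one. Since any chain $v_a<v_b<\cdots$ that holds linearly also holds clockwise (no path vertex wraps past the cut), the defining conditions (i)--(iii) survive, so an ordered $P_k^r$ is automatically a cg $P_k^r$; hence a cg-$P_k^r$-free family is ordered-$P_k^r$-free. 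Theorem~\ref{cpthm} then yields three of the needed upper bounds for free: the exact bound $\binom{n}{r}-\binom{n-r}{r}$ when $k=r+1$ (here $k-1=r$), the tight bound $O(n^{r-1}\log n)$ when $k\ge 2r\ge r+2$, and the tight bound $O(n^{r-1})$ when $3\le k\le r+1$. The only upper bound it does \emph{not} settle is the middle range $r+2\le k\le 2r-1$, where cutting gives only $O(n^{r-1}\log n)$.

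For the matching lower bounds I would treat three pieces. For $k=r+1$ I would exhibit an explicit extremal family generalizing the convex triangulation used by Bra{\ss}--K\'arolyi--Valtr for $P_3^2$: roughly, all $r$-sets that either contain two cyclically consecutive vertices or pass through one distinguished vertex, normalized to have exactly $\binom{n}{r}-\binom{n-r}{r}$ edges. The point to verify is that the ``first'' edge $e_0$ of any crossing $(r+1)$-path occupies the odd positions $\{p_1,p_3,\dots,p_{2r-1}\}$ among its $2r$ vertices and is therefore too spread out---no two of its vertices are adjacent, and its maximum is at most the second largest path vertex---to belong to the family. For the order-of-magnitude bound $\Omega(n^{r-1})$ throughout $3\le k\le 2r-1$, a star (all $r$-sets through a fixed vertex) already works whenever $k\ge r+1$, since $P_k^r$ then contains the two disjoint edges $e_0$ and $e_r$; the short range $3\le k\le r$ is covered by placing the ordered extremal configuration of Theorem~\ref{cpthm} on the circle and deleting the $O(n^{r-2})$ edges near the cut that could complete a wrap-around path.

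For $k\ge 2r$ I would build a $P_k^r$-free family with $\Omega(n^{r-1}\log n)$ edges by recursive halving: split $\vb{\Omega}_n$ into two arcs, recurse on each, and add $\Omega(n^{r-1})$ carefully chosen straddling edges, so that $f(n)\ge 2f(n/2)+\Omega(n^{r-1})$ and thus $f(n)=\Omega(n^{r-1}\log n)$; equivalently one can place the ordered extremal construction of Theorem~\ref{cpthm} directly on the circle. Freeness rests on showing that a crossing $k$-path with $k\ge 2r$ cannot wrap around the cut while using the recursively added straddling edges---this is exactly the inequality that fails for $k\le 2r-1$ and thereby pins the threshold at $2r$.

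The main obstacle is the remaining upper bound: proving $\ex_\cir(n,P_k^r)=O(n^{r-1})$ for $r+2\le k\le 2r-1$, where cutting only gives $O(n^{r-1}\log n)$. Here the cyclic constraint must be used essentially, since a cg-$P_k^r$-free family avoids the path in \emph{every} rotational position, a feature invisible to a single cut. I would attempt this by induction on $r$ through vertex links: a family with at least $Cn^{r-1}$ edges has a vertex whose link is an $(r-1)$-graph with $\Omega(n^{r-2})$ edges, and I would show that a crossing path guaranteed in the link by the inductive hypothesis lifts to a crossing $k$-path in the original hypergraph. Making the parameter $k$ descend correctly under the link operation, and ruling out precisely the wrap-around paths of length between $r+2$ and $2r-1$ that force the log factor to appear only at $2r$, is the delicate step; a more hands-on alternative is a random-cut argument showing that with positive probability cyclic freeness upgrades to freeness of a \emph{shorter} ordered path, placing the problem back in the $O(n^{r-1})$ regime of Theorem~\ref{cpthm}.
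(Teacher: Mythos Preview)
Your overall framework is right: the cutting reduction $\ex_\cir(n,P_k^r)\le\ex_\to(n,P_k^r)$ together with Theorem~\ref{cpthm} delivers the upper bounds for $k=r+1$, for $k\ge 2r$, and for $3\le k\le r+1$ exactly as you say. The lower bounds for $3\le k\le 2r-1$ and for $k=r+1$ are also essentially as in the paper (the paper actually routes the $k=r+1$ lower bound through the crossing $2$-matching: since $P_{r+1}^r\supseteq M_2^r$, any $M_2^r$-free family is $P_{r+1}^r$-free, and Theorem~\ref{cgh-cmatching} gives $\ex_\cir(n,M_2^r)=\binom{n}{r}-\binom{n-r}{r}$). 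For the $\Omega(n^{r-1}\log n)$ lower bound at $k\ge 2r$ the paper does, as your second alternative suggests, place the ordered construction $G(n,r,r+2)$ on the circle, but the freeness check is algebraic rather than recursive: one locates the leftmost vertex $w_{r+j}$ among $\{w_r,\ldots,w_{2r-1}\}$ in a hypothetical $P_{2r}^r$, and then the three edges $A_{j+1},A_{j+r},A_{j+r+1}$ produce first-gap differences $d_1,d_3<d_2<d_1+d_3$, which cannot all be powers of~$2$.

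The genuine gap is the upper bound $O(n^{r-1})$ for $r+2\le k\le 2r-1$, which you correctly flag as the crux. Your two suggestions---induction on $r$ via links, or a random-cut upgrade to a shorter ordered path---are not developed, and there is a concrete obstruction to the first: the link of a vertex in a cg $r$-graph is an $(r-1)$-graph on a \emph{linearly} ordered set (the chosen vertex cuts the cycle), so the inductive hypothesis you would invoke is ordered rather than convex geometric, and in the ordered setting the extremal function already carries the $\log n$ factor as soon as $k\ge r+1$. The paper takes a quite different route. It invokes a separate structural result (Theorem~\ref{splitter}, proved in~\cite{FJKMV2}): every ordered $r$-graph $H$ contains a \emph{split} subgraph $G$ with $d(G)\ge c_r\,d(H)$, meaning there is an interval partition $X_0<\cdots<X_{r-3}<X$ of $V(G)$ such that every edge of $G$ has two vertices in $X$ and one in each $X_i$. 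Inside such a $G$ the paper then runs an explicit edge-by-edge path-extension argument (Proposition~\ref{prop}): for each $f\in\partial G$ missing one part one deletes the edge $f\cup\{v\}$ with $v$ extremal in that part, applies induction to the remainder, and re-attaches the deleted edge to extend the crossing path by one step. The doubled part $X$ is exactly what accommodates the two ``turns'' of the crossing path at positions $r-2,r-1$ and $2r-2,2r-1$, which is why the argument reaches $k=2r-1$ but no further. This is the mechanism that pins the threshold at $2r$, and it does not appear to be accessible through link induction or random cutting alone.
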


For short paths  we have the following better bounds, which improve the previous results on this problem by Aronov et. al.~\cite{Aronov} when $k=2$.

\begin{thm}\label{maincrossing_short}
	For fixed $2\le k \le r$,
	\begin{equation}\label{eq1}
	 (1+o(1))\frac{k-1}{3 \ln 2r}{n\choose r-1}<
	\ex_{\circlearrowright}(n,P_k^r) \le
	\dfrac{(k-1)(r-1)}{r}\dbinom{n}{r-1}.
	\end{equation}
Furthermore, when $k \in \{2,r\}$, the following sharper bounds hold: 
\begin{eqnarray}
\label{eq61}     \ex_{\circlearrowright}(n,P_2^r) &\le&  \frac{1}{2}{n \choose r-1} \\
\label{eq62}     \ex_{\cir}(n, P_r^r) &\geq& (1-o(1))(r-2){n \choose r-1}.
\end{eqnarray}
	\end{thm}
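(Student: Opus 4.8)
The unifying observation will be that the only \emph{local} obstruction is a copy of $P_2^r$: two edges $e,f$ with $|e\cap f|=r-1$ whose private vertices $e\setminus f$ and $f\setminus e$ lie in a common arc (gap) of the shared $(r-1)$-set. Call such a pair \emph{gap-adjacent}. One checks directly from Definition~\ref{defCP} that the consecutive edges of a crossing path $P_k^r$ are gap-adjacent, so a $P_k^r$ is a chain of $k$ edges each gap-adjacent to the next in a consistent cyclic direction. All four bounds will be read off from this picture.

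For the sharp upper bound \eqref{eq61} I would first cut the cyclic order at one point to obtain a genuine linear order on $\vb{\Omega}_n$, so that every edge has a well-defined minimum and maximum. To each edge $e$ I assign the two $(r-1)$-sets $e\setminus\{\min e\}$ and $e\setminus\{\max e\}$, and the plan is to show these $2m$ sets are pairwise distinct. If two edges had the same max-deletion shadow $S$, their private vertices would both exceed $\max S$ and hence lie in the single arc of $S$ running through the cut, making the two edges gap-adjacent --- a forbidden $P_2^r$; the same argument handles equal min-deletion shadows, and a max-deletion shadow can equal a min-deletion shadow only when one private vertex exceeds $\max S$ and the other precedes $\min S$, again the same arc. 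Hence $2m\le\binom{n}{r-1}$. This step is clean and self-contained.

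For the general upper bound in \eqref{eq1} I would prove that a $P_k^r$-free $r$-graph decomposes into $k-1$ gap-adjacency-free (i.e.\ $P_2^r$-free) classes. Set $\ell(e)$ to be the number of edges in a longest crossing subpath ending at $e$; since there is no $P_k^r$ we have $\ell(e)\le k-1$, and coloring $e$ by $\ell(e)$ should give the decomposition, because a gap-adjacent pair can be oriented so that one edge extends a crossing path terminating at the other, forcing $\ell$ to differ. Each class, being $P_2^r$-free, has every $(r-1)$-set extended into distinct gaps and so contributes at most $\tfrac{r-1}{r}\binom{n}{r-1}$ edges; summing over $k-1$ classes yields the bound. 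The main obstacle is precisely the verification that a local gap-adjacency step always appends to a crossing path to produce a genuine $P_{j+1}^r$ --- that a consistently oriented chain of gap-adjacencies neither folds back nor creates spurious high intersections (and in particular has no directed cycle) --- since this equivalence between the local and global conditions is what powers the coloring.

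For the lower bounds I would build edge sets whose gap-adjacency chains all have length less than $k$. For \eqref{eq1} the plan is a randomized construction superimposing roughly $k-1$ sparse $P_2^r$-free layers, organizing the up to $r$ gaps of each $(r-1)$-set into $O(\log r)$ scales and retaining in each layer one extension per scale; a union bound over the $\log 2r$ scales controls the creation of long crossing paths and accounts for the $\tfrac{1}{3\ln 2r}$ density loss. For the near-optimal case $k=r$ in \eqref{eq62} I would instead glue $r-1$ \emph{near-extremal} $P_2^r$-free families (each using essentially all $r-1$ gaps of every $(r-1)$-set), arranged by a level function so that no crossing path reaches length $r$; counting then gives $(1-o(1))(r-2)\binom{n}{r-1}$, matching the $k=r$ case of \eqref{eq1} up to a factor $1+O(1/r)$. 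Here the difficulty is to certify that the glued family is genuinely $P_r^r$-free while forfeiting only a $1-o(1)$ fraction of the maximum per-layer density.
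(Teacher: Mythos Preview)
Your argument for \eqref{eq61} is correct and is exactly the paper's proof: cut the cycle, assign to each edge the two shadows obtained by deleting its minimum and its maximum, and check that all $2e(H)$ shadows are distinct.

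The coloring argument for the general upper bound in \eqref{eq1}, however, has a real gap. You color an edge $e$ by the length $\ell(e)$ of the longest crossing path terminating at $e$, and claim that gap-adjacent edges get different colors. This fails already for $r=3$: take $e=\{1,2,5\}$, $f=\{1,3,5\}$, $g=\{1,4,5\}$ on $\vb{\Omega}_n$. Any two of these are gap-adjacent (all share $\{1,5\}$ and have private vertices in the arc $(1,5)$), but no tight $3$-path exists among them since every pairwise intersection has size $2$. Here $\ell(e)=1$ while $\ell(f)=\ell(g)=2$, so $f$ and $g$ are gap-adjacent edges of the same color. The difficulty you flagged---that a gap-adjacent step need not extend a given crossing path---is not a technicality but a genuine obstruction: to pass from $e_j$ to $e_{j+1}$ one must drop the \emph{specific} vertex $v_j$ and insert the new vertex into the \emph{specific} gap $(v_j,v_{j+1})$, whereas gap-adjacency only says the two private vertices share \emph{some} gap. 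The paper circumvents this by tracking not edges but \emph{labeled} ending edges: it sets $T_k(H)$ to be the set of ordered $r$-tuples $(v_k,\dots,v_{r+k-1})$ arising as the final edge of some $P_k^r$, and proves by an explicit injection that $|T_k(H)|\ge r\,e(H)-(k-1)(r-1)|\partial H|$. When $H$ is $P_k^r$-free, $T_k(H)=\varnothing$ and the bound follows. This is morally the refinement of your idea in which the color of $e$ records both the path length and which vertex of $e$ is $v_j$; without that extra data the argument does not close.

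For the two lower bounds your sketches are too vague to assess and do not resemble the paper's proofs, which are explicit and non-random. For \eqref{eq1} the paper takes $m\approx n\ln(2r)/((r-1)(k-1))$ and keeps those $r$-sets that (i) have no $k-1$ consecutive gaps all longer than $m$ and (ii) sum to $0$ modulo $m$; condition (ii) forces $v_{i+r}\equiv v_i\pmod m$ along any crossing path, so the first edge of a $P_k^r$ would violate (i). For \eqref{eq62} the construction is the family of $r$-sets $a_1<\dots<a_r$ with $a_{i-1}+1=a_i$ for some $2\le i\le r-1$; the point is that in a $P_r^r$ every edge $e_i$ can have at most one pair of $\vb{\Omega}_n$-consecutive vertices, namely $\{v_{i+r-1},v_i\}$, and one then checks this pair cannot lie in the allowed range. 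Neither construction involves layering $P_2^r$-free families.
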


The lower bound in (\ref{eq62}) is close to the upper bound in (\ref{eq1}), since the upper bound is $(r - 2 + 1/r){n \choose r - 1}$. 
We remark that it remains open to prove or disprove that for every $r \geq 2$, there exists $c_r$ such that $c_r \rightarrow 0$ as $r \rightarrow \infty$ and $$\ex_{\circlearrowright}(n, P_2^r) \leq c_r {n \choose r-1} + o(n^{r-1}).$$

Theorems~\ref{cpthm} and~\ref{cgthm} reveal a discrepancy between the ordered setting and the convex geometric setting: in the convex geometric setting, crossing paths of length up to $2r - 1$ have extremal function of order $n^{r - 1}$, whereas this phenomenon only occurs for crossing paths of length up to $r + 1$ in the ordered setting. In fact, we know  that
$\ex_{\circlearrowright}(n, P^r_k)=\ex_{\to}(n, P^r_k)$ iff $k \in \{1, r+1\}$.

\subsection{Crossing matchings}

Let $M_k^2$ denote the cgg consisting of $k$ pairwise crossing line segments. In other words, there is a labelling of the vertices such that the edges of the matching are $v_i v_{k + i}$ for $1 \leq i \leq k$, and $v_1 < v_2 < \dots < v_{2k}$.

Capoyleas and Pach~\cite{Capoyleas-Pach} proved the following theorem which  extended a result of Ruzsa (he proved the case $k=3$) and settled a question of G\"artner and conjecture of Perles:

\begin{thm} [Capoyleas-Pach~\cite{Capoyleas-Pach}] \label{cgg-cmatching}
	For all $n \geq 2k - 1$, $\ex_{\circlearrowright}(n,M_k^2) = 2(k-1)n - {2k - 1 \choose 2}$.
\end{thm}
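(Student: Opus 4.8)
The plan is to prove matching lower and upper bounds, both organized around one elementary observation. Call the \emph{length} of a chord $\{a,b\}$ the smaller of the two numbers of steps between $a$ and $b$ along the cyclic order, so short chords are local and long chords are global. The key observation is that if $k$ chords pairwise cross, then each of them has length at least $k$: the other $k-1$ chords each contribute one endpoint to each of the two open arcs cut off by a fixed chord $e$, so each arc of $e$ contains at least $k-1$ vertices, forcing length $\ge k$. Consequently every chord of length at most $k-1$ is automatically \emph{safe}: it can never belong to a copy of $M_k^2$. I will use this fact in both directions.

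For the lower bound I would build an extremal configuration by \emph{ear insertion}. Start from the complete graph on $2k-1$ cyclically ordered vertices; this has $\binom{2k-1}{2}$ edges and, having fewer than $2k$ vertices, cannot contain the matching $M_k^2$. Now repeatedly insert a new vertex $v$ into a gap between two consecutive vertices and join $v$ to the $k-1$ nearest vertices on each side, so that $v$ acquires degree exactly $2(k-1)$ and all its incident edges have length at most $k-1$. By the observation none of these new edges can lie in a copy of $M_k^2$, and since crossing relations among the old chords are unchanged by the insertion, an easy induction on the number of insertions shows that no copy of $M_k^2$ is ever created. After growing to $n$ vertices we have added $2(k-1)$ edges for each of the $n-(2k-1)$ insertions, giving
\[
\binom{2k-1}{2} + 2(k-1)\bigl(n-(2k-1)\bigr) = 2(k-1)n - \binom{2k-1}{2}
\]
edges, matching the claimed value.

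For the upper bound I would argue by induction on $n$; since adding edges only increases the count, it suffices to bound \emph{maximal} $M_k^2$-free configurations. In a maximal configuration every chord of length at most $k-1$ is present (it is always safe to add, by the observation), so every vertex already has degree at least $2(k-1)$, and the complete graph settles the base case $n=2k-1$, where the formula equals $\binom{n}{2}$. For the inductive step I want a vertex $v$ of degree exactly $2(k-1)$, i.e.\ a vertex incident only to short chords (an \emph{ear}); deleting it leaves an $M_k^2$-free configuration on $n-1$ vertices, to which the induction hypothesis applies, yielding at most $2(k-1)(n-1)-\binom{2k-1}{2}+2(k-1)=2(k-1)n-\binom{2k-1}{2}$ edges.

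The crux, and the step I expect to be the main obstacle, is therefore the \emph{Ear Lemma}: every maximal $M_k^2$-free configuration on $n\ge 2k$ vertices has a vertex incident to no chord of length $\ge k$. Equivalently, passing to the subgraph $H$ of long chords and taking the contrapositive: if every vertex meets a chord of length at least $k$, then there are $k$ pairwise crossing chords. For $k=2$ this is exactly the classical two-ears theorem for triangulations, and the difficulty is that naive averaging is too weak, since a maximal configuration can have minimum degree strictly above $2(k-1)$; one must genuinely \emph{locate} an ear rather than find a globally sparse vertex. I would attack the Ear Lemma by an extremal choice inside $H$, focusing on a longest chord or on a chord crossed by the largest pairwise-crossing subfamily and peeling off a crossing family of size $k$ from how the covering long chords interleave; alternatively one can invoke the structure theory of $k$-triangulations (Dress--Koolen--Moulton, Nakamigawa, Pilaud--Santos), which pins down the exact edge count and the existence of ears directly. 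Once the Ear Lemma is in hand the induction closes and the two bounds meet.
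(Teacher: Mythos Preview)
The paper does not give its own proof of this statement: Theorem~\ref{cgg-cmatching} is quoted from Capoyleas and Pach~\cite{Capoyleas-Pach} as a known result and then applied as a black box in Section~\ref{pf:cmatching}. So there is no ``paper's proof'' to compare your proposal against.

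As for the proposal itself, your lower bound construction is fine and standard. The upper bound, however, has a genuine gap that you yourself flag: the \emph{Ear Lemma} (every maximal $M_k^2$-free configuration on $n\ge 2k$ vertices has a vertex meeting no long chord) is asserted but not proved. Your sketched attacks---``focus on a longest chord'' or ``peel off a crossing family from how the covering long chords interleave''---are not arguments, and in fact the Ear Lemma is essentially as hard as the theorem. Your fallback of invoking the structure theory of multi-triangulations (Nakamigawa, Dress--Koolen--Moulton) is circular in spirit: those papers establish exactly the edge count $2(k-1)n-\binom{2k-1}{2}$ for maximal configurations, which is the theorem you are trying to prove (indeed, Nakamigawa's argument \emph{is} an ear-based induction, but proving the existence of an ear is the whole content). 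The original Capoyleas--Pach proof avoids this altogether via a continuous deformation/sweeping argument that tracks how crossing patterns change as one point moves, and does not rely on finding ears.

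In short: the paper offers no proof to compare with, and your proposal reduces the theorem to a lemma of equal difficulty that you leave open.
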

As mentioned earlier, a related open problem of Bra{\ss}~\cite{Brass} is to determine all acyclic graphs $F$ such that $\ex_{\circlearrowright}(n,F) = O(n)$.

For $r \ge 2$, an $r$-uniform {\em crossing $k$-matching} $M_k^r$ has vertex set $v_1,v_2,\dots,v_{rk}$ on a convex $n$-gon in clockwise order
and consists of the edges $\{v_i,v_{i+k},\dots,v_{i+(r-1)k}\}$ for $1 \leq i \leq k$.
Note that crossing paths have the property that if we take every $r$th edge of the path, we obtain a crossing matching.

One can similarly define a {\em crossing $k$-matching} $M_k^r$ in ordered $r$-graphs: it has vertex set $v_1,v_2,\cdots,v_{rk}$
with $v_1<v_2<\ldots<v_{rk}$
and consists of the edges $\{v_i,v_{i+k},\dots,v_{i+(r-1)k}\}$ for $1 \leq i \leq k$.
However, if we consider a cg $r$-graph $G_1$ and an ordered $r$-graph $G_2$ with the same set of vertices and the same set of edges
(only  the ordering in $G_1$ is linear and in $G_2$ is circular),
then with our definitions a set $F$ of edges is a crossing matching in $G_1$ if and only if it is a crossing matching in $G_2$.
It follows that $$\ex_{\circlearrowright}(n,M_k^r)=\ex_{\to}(n,M_k^r) \qquad \hbox{ for all $k,r,n$}.$$

Aronov, Dujmovi\v{c}, Morin, Ooms and da Silveira~\cite{Aronov} considered the case $k=2$, $r=3$ and determined the order of magnitude in those cases; our result below provides better bounds.
The $k=2$ case of Theorem \ref{cgh-cmatching} could be viewed as an ordered version of the Erd\H{o}s-Ko-Rado Theorem.

\begin{thm}\label{cgh-cmatching}
	For $n>r>1$,
	$$\ex_{\c}(n, M^r_2) =  {n \choose r} - {n-r \choose r}  $$
	and for fixed $k, r > 2$,
	\[
(1-o(1))	(k-1)r {n \choose r - 1}  \leq \ex_{\circlearrowright}(n,M_k^r) \leq 2(k - 1)(r-1){n \choose r - 1}.
	\]
\end{thm}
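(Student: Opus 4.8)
The plan is to prove the two halves of the statement by different methods, using throughout the given identity $\ex_{\cir}(n,M_k^r)=\ex_{\to}(n,M_k^r)$ so that I may pass freely between the cyclic and the linear order. The organizing observation is a clean description of crossing: two disjoint $r$-sets $A$ and $B$ form a copy of $M_2^r$ (they \emph{cross}) if and only if their $2r$ points alternate around the circle, i.e. each of the $r$ arcs cut out by $A$ contains exactly one point of $B$. In particular, if $A$ contains two cyclically consecutive vertices then one of its arcs is empty and $A$ crosses nothing at all; and more generally a copy of $M_k^r$ consists of $k$ \emph{pairwise crossing} edges (each pair $E_i,E_j$ alternates), arranged into the special aligned pattern of Definition~\ref{defCP}. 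These two facts drive the lower and upper bounds respectively.

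For the exact case $k=2$, I would first phrase the problem as a covering problem: form the graph $H$ on $\binom{[n]}{r}$ whose edges are the alternating (crossing) pairs, so that every $2r$-subset contributes exactly one edge of $H$ and an $M_2^r$-free family is precisely an independent set of $H$. Since $\alpha(H)+\tau(H)=\binom{n}{r}$, the assertion $\ex_{\cir}(n,M_2^r)=\binom{n}{r}-\binom{n-r}{r}$ is equivalent to $\tau(H)=\binom{n-r}{r}$. For the lower bound on $\alpha(H)$ (the construction) I would start from the family of all $r$-sets containing a cyclically consecutive pair, which is $M_2^r$-free by the remark above and has size $\binom{n}{r}-\tfrac{n}{n-r}\binom{n-r}{r}$, and then augment it by a carefully chosen family of ``spread'' sets (no consecutive pair) that are pairwise non-alternating, reaching $\binom{n}{r}-\binom{n-r}{r}$; for $r=2$ this augmented family is exactly a fan triangulation, and the general construction is its $r$-uniform analogue. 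The matching upper bound I would attempt by a shifting/compression argument that preserves $M_2^r$-freeness and does not decrease the number of edges, reducing any extremal family to a canonical one that can be counted directly.

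For the range $k,r>2$, I would obtain the lower bound $(1-o(1))(k-1)r\binom{n}{r-1}$ by superimposing $k-1$ rotated copies of the optimal crossing-free family $\cF_0$ from the case $k=2$. Since each rotation is again crossing-free, a pigeonhole argument shows their union contains no $k$ pairwise crossing edges, hence no copy of $M_k^r$; as $\cF_0$ has $\binom{n}{r}-\binom{n-r}{r}=(1-o(1))\,r\binom{n}{r-1}$ edges and, for fixed $k,r$, distinct rotations overlap in only a negligible fraction of edges, the union has the claimed size. For the upper bound $2(k-1)(r-1)\binom{n}{r-1}$ I would induct on $r$, using that deleting the top block $v_{(r-1)k+1},\dots,v_{rk}$ from a copy of $M_k^r$ yields a copy of $M_k^{r-1}$: projecting each edge $E$ to $E\setminus\{\max E\}$ lets me relate $\ex(n,M_k^r)$ to $\ex(n,M_k^{r-1})$, with the graph case (Capoyleas--Pach, Theorem~\ref{cgg-cmatching}, whose leading term is $2(k-1)n$) as the base; the telescoping across the $r-1$ levels is what produces the factor $(r-1)$.

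The main obstacle, in both halves, is the upper bound. For $k=2$ the difficulty is that splitting a family by a distinguished extreme vertex does not decouple cleanly: two edges through that vertex automatically intersect and impose no constraint on each other, so the entire $M_2^r$-freeness is carried by the interaction between edges that do and do not contain the vertex, which a naive induction cannot see; making the shifting argument converge to exactly $\binom{n-r}{r}$ forbidden sets is the crux. For $k>2$ the obstacle is that avoiding $M_k^r$ is strictly weaker than avoiding $k$ pairwise crossing edges, so the upper bound cannot merely invoke the pairwise-crossing bound; the inductive step must control the \emph{alignment} of the $k$ edges (their partition into synchronized blocks), not just their pairwise crossings, and verifying that a surplus of edges forces this rigid aligned pattern is where the real work lies.
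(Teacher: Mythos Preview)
Your proposal identifies the right building blocks but leaves the decisive steps unproved, and in two places the proposed route is not the one that actually works.

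\textbf{Upper bound for $k=2$.} You propose a shifting/compression argument and explicitly call the convergence to exactly $\binom{n-r}{r}$ ``the crux''; you do not indicate which shift preserves $M_2^r$-freeness or why it terminates at the right family. The paper bypasses this entirely: it observes that $M_2^r$ is a subhypergraph of the ordered crossing path $P_{r+1}^r$ (edges $e_0$ and $e_r$ of the path alternate), so $\ex_{\to}(n,M_2^r)\le \ex_{\to}(n,P_{r+1}^r)=\binom{n}{r}-\binom{n-r}{r}$, the latter equality being Theorem~\ref{cpthm}. For the matching lower bound the paper uses the explicit ordered family $G(n,r,r+1)$ (sets with a linearly consecutive pair among the first $r$ coordinates, or containing $n$), which is already $M_2^r$-free; no augmentation step is needed.

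\textbf{Upper bound for $k,r>2$.} Your induction on $r$ via $E\mapsto E\setminus\{\max E\}$ does not go through: the projected multiset need not be $M_k^{r-1}$-free, and you give no mechanism to lift an $M_k^{r-1}$ in the projection back to an $M_k^r$ upstairs. (Incidentally, your stated obstacle about ``alignment'' is not real: $k$ pairwise alternating $r$-sets automatically form an $M_k^r$, since the relative order of any two sets is the same in every arc cut out by a third. But this observation does not rescue the induction.) The paper's argument is different and short: for each edge $A$ choose a shortest side $ch(A)=v_rv_1$ and record its \emph{type}, the vector of the other $r-1$ gap lengths. There are exactly $\binom{n-2}{r-2}$ types; $k$ edges of the same type whose shortest chords pairwise cross form an $M_k^r$, so for each type the chords give a convex geometric graph with no $k$ pairwise crossing chords. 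Capoyleas--Pach (Theorem~\ref{cgg-cmatching}) then bounds each type by $2(k-1)n$, and summing over types gives $2(k-1)(r-1)\binom{n}{r-1}$.

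\textbf{Lower bound for $k,r>2$.} Your ``rotate the optimal $M_2^r$-free family $k-1$ times'' idea is correct in spirit (pigeonhole does give $M_k^r$-freeness), but the overlap claim is unjustified and can fail outright: the cyclically-consecutive-pair family is rotation invariant, so rotating it gains nothing. The paper's construction is explicit: take $A=$ all edges through one of $k-1$ fixed vertices and $B=$ all edges with a side of length at most $k-1$. No edge of an $M_k^r$ can lie in $B$ (all its sides have length $\ge k$), so all $k$ edges would lie in $A$ and two would share a fixed vertex. A short count gives $|A\cup B|=(k-1)r\binom{n}{r-1}+O(n^{r-2})$.
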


Note that, unlike the results on the paths, there are no extra $\log n$ factors in the formulas for crossing matchings.
We were unable to determine the asymptotic behavior of $\ex_{\circlearrowright}(n,M_k^r)$ for any pair $(k,r)$ with $k,r > 2$.

\section{Proof of Theorem~\ref{cpthm}}

\subsection{Upper bound for $k \leq r + 1$}
Observe that $\ex_{\to}(n,P^1_2)=1$ for all $n \ge 1$. We then have the following recurrence:

\begin{prop} \label{firstclaim}
Let $2\leq k\leq r+1$ and $n\geq r+k$. Then
\begin{equation}\label{j12}
\ex_{\rightarrow}(n, P^r_k)\leq {n-2\choose r-2}+\ex_{\rightarrow}(n-2, P^{r-1}_{k-1})+\ex_{\rightarrow}(n-1, P^r_k).
\end{equation}
\end{prop}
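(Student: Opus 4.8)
The plan is to prove \eqref{j12} by deleting the two \emph{extreme} vertices in the ordering — the maximum and the minimum — and sorting the edges of a $P_k^r$-free ordered $r$-graph $G$ on $[n]$ according to how they meet these two vertices. Write $1$ and $n$ for the least and greatest vertices of $[n]$. First I would split $E(G)$ into the edges that avoid $n$ and the edges through $n$. The edges avoiding $n$ span the ordered $(n-1)$-vertex $r$-graph $G-n$, which is again $P_k^r$-free, so there are at most $\ex_{\to}(n-1,P_k^r)$ of them; this is the last term of \eqref{j12}. It therefore remains to bound $\deg_G(n)$, the number of edges through $n$, by ${n-2\choose r-2}+\ex_{\to}(n-2,P^{r-1}_{k-1})$.

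For this I pass to the link $L=\{\,e\setminus\{n\}: e\in G,\ n\in e\,\}$, an ordered $(r-1)$-graph on $\{1,\dots,n-1\}$ with $|L|=\deg_G(n)$, and split $L$ according to whether an edge contains the minimum vertex $1$. The edges of $L$ through $1$ are determined by their remaining $r-2$ vertices, chosen from $\{2,\dots,n-1\}$, so there are at most ${n-2\choose r-2}$ of them, the first term of \eqref{j12}. The rest form the ordered $(r-1)$-graph $M=\{S\in L: 1\notin S\}$ on the $n-2$ vertices $\{2,\dots,n-1\}$. Thus \eqref{j12} reduces to the key claim that $M$ is $P^{r-1}_{k-1}$-free, for then $|M|\le \ex_{\to}(n-2,P^{r-1}_{k-1})$ and summing the three estimates yields \eqref{j12}. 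One sanity check I would record at this point is that a short Pascal-triangle computation shows the claimed closed form ${n\choose r}-{n-k+1\choose r}$ satisfies \eqref{j12} with \emph{equality} under exactly this decomposition, which makes the choice of the two extreme vertices the natural one.

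It remains to establish the key claim, and this is where essentially all of the work lies. I would argue by contradiction: suppose $M$ contains a crossing path $P^{r-1}_{k-1}$ with edges $f_1,\dots,f_{k-1}$ on vertices listed in crossing order, so that each $f_i\cup\{n\}$ is an edge of $G$. Reinserting $n$ as the largest vertex of every edge turns $f_1\cup\{n\},\dots,f_{k-1}\cup\{n\}$ into a tight $r$-path, which I want to use as the edges $g_1,\dots,g_{k-1}$ of a prospective $P^r_k$. Here $n$ plays the role of the unique vertex lying in all of these edges; since $k-1\le r$, in a crossing $r$-path of this length that vertex is forced to occupy the position $v_{r-1}$, which must be the \emph{maximum}, so placing the global maximum there is consistent with conditions (i)–(iii). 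Prepending the global minimum $1$ as the new initial vertex $v_0$ is then meant to supply the missing first edge $g_0=\{v_0,v_1,\dots,v_{r-1}\}$, and because $1$ and $n$ are the extreme vertices of the entire ordering, the crossing conditions (i)–(iii) for the assembled sequence $v_0v_1\cdots v_{r+k-2}$ would follow routinely from those already holding for the copy of $P^{r-1}_{k-1}$ inside $M$ (the only new requirements, such as $v_0<v_r$ from (ii) with $j=0$, are immediate from $v_0=1$).

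The one genuinely delicate point — and the hard part — is that the prepended edge $g_0$ must actually belong to $G$. Unwinding the construction, $g_0$ is an $r$-set containing \emph{both} extremes $1$ and $n$, namely $g_0=\{1\}\cup(f_1\setminus\{\min f_1\})\cup\{n\}$, obtained from the first tail edge $f_1\cup\{n\}$ by exchanging its smallest vertex $\min f_1=v_r$ for the global minimum. Equivalently, writing $T=\{v_1,\dots,v_{r-2}\}$ for the common $(r-2)$-set, I know $T\cup\{v_r\}\in L$ and I need $T\cup\{1\}\in L$. This is not automatic from a single copy of $P^{r-1}_{k-1}$, so I would not take an arbitrary copy but an extremal one — for instance one minimizing the smallest vertex $v_r$ used by the tail — and then run a vertex-exchange argument: if $T$ had no completion in $L$ by a vertex below $v_r$, I would slide that endpoint downward to obtain either a strictly smaller copy (contradicting minimality) or directly the completion by $1$, using crucially that $1$ lies below every other vertex so that the exchange never violates (i)–(iii). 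Making this exchange precise, and checking it against the crossing conditions, is the technical heart of \eqref{j12}; everything else is the bookkeeping described above.
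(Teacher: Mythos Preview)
Your decomposition has a genuine gap at exactly the place you flag as delicate, and the exchange argument you sketch cannot close it. The claim that $M$ is $P^{r-1}_{k-1}$-free is simply false in general. Take $r=3$, $k=2$, $n=5$ and $G=\{\{2,3,5\}\}$: this is trivially $P^3_2$-free, yet the link of $5$ restricted to $\{2,3,4\}$ is $M=\{\{2,3\}\}$, which contains a $P^2_1$. In your extension scheme you would need $g_0=\{1,3,5\}\in G$, which fails, and your ``slide $v_r$ downward'' move is blocked since $v_r=2$ is already minimal in $\{2,\dots,n-1\}$. More generally, nothing in the hypothesis forces any edge through both $1$ and $n$ to exist, so no minimality trick on copies inside $M$ can manufacture the missing first edge.

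The paper's proof avoids this by working with the two \emph{smallest} vertices $1$ and $2$ rather than the two extremes, and by a different three-way split. It sets $G_1=\{e:\{1,2\}\subset e\}$, then $G_2=\{e: 1\in e,\ 2\notin e,\ (e\setminus\{1\})\cup\{2\}\in G\}$, and lets $G_3$ be obtained from the rest by \emph{merging} $1$ and $2$ into a single vertex. The point of the definition of $G_2$ is that for every $e''\in H_2:=\{e\setminus\{1\}:e\in G_2\}$ one automatically has \emph{both} $e''\cup\{1\}\in G$ and $e''\cup\{2\}\in G$; so a $P^{r-1}_{k-1}$ in $H_2$ lifts to a $P^r_k$ in $G$ by using $e''_1\cup\{1\}$ as the first edge and $e''_{i-1}\cup\{2\}$ for $i\ge2$, with $v_0=1$ and $v_r=2$. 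The merging in $G_3$ is what yields the $\ex_\to(n-1,P^r_k)$ term; a plain vertex deletion would not account for the edges through $1$ but not $2$ that were excluded from $G_2$. Your choice of $1$ and $n$ gives no analogue of the ``swap'' condition, which is why the prepended edge cannot be guaranteed.
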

\proof Let $G$ be an $n$-vertex ordered $r$-graph  not
containing  $P^r_k$ with $e(G)=\ex_{\rightarrow}(n, P^r_k)$. We may assume $V(G)=[n]$ with the natural ordering.
Let $G_1=\{e\in G: \{1,2\}\subset e\}$ and $G_2=\{e\in G: 1\in e, 2\notin e, e-\{1\}\cup \{2\}\in G\}$.
Let $G_3$ be obtained from $G-E(G_1)-E(G_2)$ by
 gluing vertex $1$ with vertex $2$ into
a new vertex $2'$.

Since we have deleted the edges of $G_1$, our $G_3$ is an $r$-graph,
 and since we have deleted the edges of $G_2$,  $G_3$ has no multiple edges.
Thus $e(G)=e(G_1)+e(G_2)+e(G_3)$.

 We view $G_3$ as an ordered $r$-graph with vertex set $\{2',3,\ldots,n\}$. If $G_3$ contains a crossing ordered path
 $P$ with edges $e'_1,e'_2,\ldots,e'_k$, then only $e'_1$ may contain $2'$, and all other edges are edges of $G$.
 Thus either $P$ itself is in $G$ or the path obtained from $P$ by replacing $e'_1$ with $e'_1-\{2'\}+\{1\}$ or with
$e'_1-\{2'\}+\{2\}$ is in $G$, a contradiction. Thus $G_3$ contains no $P^r_k$ and hence
$$e(G_3)\leq \ex_{\rightarrow}(n-1, P^r_k).$$
By definition, $e(G_1)\leq {n-2\choose r-2}$. We can construct an ordered $(r-1)$-graph $H_2$ with vertex set
$\{3,4,\ldots,n\}$ from $G_2$ by deleting from each edge vertex $1$. If $H_2$ contains a crossing ordered path
 $P'$ with edges $e''_1,e''_2,\ldots,e''_{k-1}$, then the set of edges $\{e_1,\ldots,e_k\}$ where $e_1=e''_1+\{1\}$ and
 $e_i=e''_{i-1}+\{2\}$ for $i=2,\ldots,k$ forms a $P^r_k$ in $G$, a contradiction. Summarizing, we get
\begin{eqnarray*}
\ex_{\rightarrow}(n, P^r_k)=e(G) &=& e(G_1)+e(G_2)+e(G_3) \\
&\leq&  {n-2\choose r-2}+\ex_{\rightarrow}(n-2, P^{r-1}_{k-1})+ \ex_{\rightarrow}(n-1, P^r_k),
\end{eqnarray*}
as claimed.\qed

We are now ready to prove the upper bound in Theorem \ref{cpthm} for $k \leq r + 1$: We are to show that
$\ex_{\rightarrow}(n, P^r_k) \leq {n \choose r} - {n - k + 1 \choose r}$.
We use induction on $k+n$. Since $P^r_1$ is simply an edge, $\ex_{\rightarrow}(n, P^r_1)=0$ for any $n$ and $r$, and the theorem holds for $k=1$.

Suppose now the upper bound in the theorem holds for all $(k',n',r')$ with $k'+n'<k+n$ and we want to prove it for $(k,n,r)$. By the previous paragraph,
it is enough to consider the case $k\geq 2$. Then by Proposition \ref{firstclaim} and the induction assumption,
\begin{eqnarray*}\ex_{\rightarrow}(n, P^r_k) &\leq& {n-2\choose r-2}+
\left[{n-2 \choose r-1} - {n-k \choose r-1}\right]+\left[ {n-1 \choose r} - {n-k \choose r}\right] \\
&=&\left[{n-2\choose r-2}  +{n-2 \choose r-1}+{n-1 \choose r}\right] -
\left[  {n-k \choose r}+ {n-k \choose r-1}\right] \\
&=& {n \choose r} - {n-k+1 \choose r},
\end{eqnarray*}
as required. This proves the upper bound in Theorem \ref{cpthm} for $k \leq r + 1$. \qed

\subsection{Lower bound for $k \leq r + 1$}

For the lower bound in Theorem \ref{cpthm} for $k \leq r + 1$, we provide the following construction. For  $1\leq k\leq r$, let $G(n,r,k)$ be the family of $r$-tuples $(a_1,\ldots,a_r)$ of positive integers such that
\begin{center}
\begin{tabular}{lp{5in}}
$(a)$ & $1\leq a_1<a_2<\ldots<a_r\leq n$ and \\
$(b)$ & there is $1\leq i\leq k-1$ such that $a_{i+1}=a_i+1$.
\end{tabular}
\end{center}
Also, let $G(n,r,r+1)=G(n,r,r)\cup \{(a_1,\ldots,a_r): a_1<a_2<\ldots<a_r=n\}$.

\medskip

Suppose $G(n,r,k)$ has an ordered crossing $P_k^r$ with edges $e_1,\ldots,e_k$.
Let $e_1=(a_1,\ldots,a_r)$ where $1\leq a_1<a_2<\ldots<a_r\leq n$.
By the definition of a crossing ordered path, for each $2\leq j\leq k$, $e_j$ has the form
\begin{equation}\label{j14}
\mbox{\em
$e_j=(a_{j,1},\ldots,a_{j,r})$ where $a_i< a_{j,i}<a_{i+1}$ for $1\leq i\leq j-1$ and $a_{j,i}=a_{i}$ for $j\leq i\leq r$.}
\end{equation}
By the definition of $G(n,r,k)$, either there is $1\leq i\leq k-1$ such that $a_{i+1}=a_i+1$ or $k=r+1$ and $a_r=n$.
In the first case, we get a contradiction with~(\ref{j14}) for $j=i+1$. In the second case,
we get a contradiction with~(\ref{j14}) for $j=r+1$.

In order to calculate $|G(n,r,k)|$, consider the following procedure $\Pi(n,r,k)$ of generating all $r$-tuples of elements of $[n]$ {\em not} in
$G(n,r,k)$: take an  $r$-tuple $(a_1,\ldots,a_r)$
of positive integers such that
 $1\leq a_1<a_2<\ldots<a_r\leq n-k+1$ and then increase $a_j$ by $j-1$ if $1\leq j\leq k$ and by $k-1$ if
 $k\leq j\leq r$. By definition, the number of outcomes of this procedure is ${n-k+1\choose r}$. Also  $\Pi(n,r,k)$
 never generates a member of $G(n,r,k)$ and generates each other $r$-subset of $[n]$ exactly once.  \qed

\subsection{Upper bound for $k \geq r + 2$}
An ordered $r$-graph has interval chromatic number $r$ if it is $r$-partite with $r$-partition $A_1, \ldots, A_r$ and  $A_i$ precedes $A_{i+1}$ in the ordering of the vertices for all $i\in [r-1]$.

Let $z_{\rightarrow}(n,F)$ denote the maximum number of edges in an $n$-vertex ordered $r$-graph of interval chromatic number $r$ that does not contain the ordered graph $F$.
Pach and Tardos~\cite{PT} showed that every $n$-vertex ordered graph may be written as the union of at most $\lceil \log n\rceil$ edge disjoint subgraphs each of whose components is a graph of interval chromatic number two, and deduced that $\ex_{\rightarrow}(n,F) = O(z_{\rightarrow}(n,F)\log n)$ for every ordered graph $F$. They also observed that the log factor is not present when $z_{\to}(n,F)=\Omega(n^c)$ and $c>1$. Unsurprisingly, this phenomenon also holds for ordered $r$-graphs when $r>2$. We will 
use the following result which is a rephrasing of~\cite{FJKMV2}, Theorem 1.1.

\begin{thm}[\cite{FJKMV2}, Theorem 1.1]   \label{splitting1}
	Fix $r \ge c\ge r-1\ge 1$ and an ordered $r$-graph $F$ with $z_{\rightarrow}(n, F)=\Omega(n^{c})$. Then
	\[ \ex_{\to}(n, F) = \left\{\begin{array}{ll}
	O(z_{\to}(n, F) \log n) & \mbox{ if } c =r-1 \\
	O(z_{\to}(n, F))& \mbox{ if }c >r-1.
	\end{array}\right.\]
\end{thm}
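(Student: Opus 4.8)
The plan is to prove a hypergraph analogue of the Pach--Tardos dyadic splitting lemma and then sum the resulting estimate as a geometric series, the two cases corresponding to whether the common ratio is $1$ or $<1$. First I would fix an $F$-free ordered $r$-graph $G$ on $[n]$ and use the canonical nested family of dyadic intervals: at scale $p$ the ground set is cut into $2^p$ consecutive blocks $I^p_1<\dots<I^p_{2^p}$ of size about $n/2^p$, each scale-$p$ block being the union of two sibling scale-$(p+1)$ blocks. To an edge $e=\{x_1<\dots<x_r\}$ I assign its \emph{separating scale} $p(e)$, the least $p$ for which the $r$ vertices lie in $r$ distinct scale-$p$ blocks; this writes $E(G)=\bigcup_{p}G_p$ as a union of $O(\log n)$ classes. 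I then refine $G_p$ by grouping its edges according to the ordered $r$-tuple of blocks $(I^p_{a_1},\dots,I^p_{a_r})$ that their vertices occupy. Each group is a transversal $r$-graph on the ordered parts $I^p_{a_1}<\dots<I^p_{a_r}$, hence an interval-chromatic-$r$ $r$-graph on at most $rn/2^p$ vertices, and being a sub-$r$-graph of $G$ it is $F$-free; so each group has at most $z_{\to}(rn/2^p,F)$ edges.

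The heart of the argument is a sharp bound on the number of nonempty groups at scale $p$. Here minimality of $p(e)$ is crucial: if the $r$ vertices are separated at scale $p$ but not at scale $p-1$, then two of them share a scale-$(p-1)$ block and therefore occupy a \emph{sibling pair} of adjacent scale-$p$ blocks. Consequently every occupied tuple $(a_1,\dots,a_r)$ must contain two consecutive indices $a_i,a_i+1$ forming a sibling pair, and the number of such $r$-subsets of $[2^p]$ is at most $2^{p-1}\binom{2^p}{r-2}=O(2^{p(r-1)})$. Multiplying the number of groups by the per-group bound yields the master inequality
\[
\ex_{\to}(n,F)=\sum_{p=0}^{\lceil\log_2 n\rceil}e(G_p)\ \le\ C\sum_{p=0}^{\lceil\log_2 n\rceil}2^{p(r-1)}\,z_{\to}(rn/2^p,F).
\]

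It remains to sum this series. Using the polynomial order of magnitude $z_{\to}(m,F)=\Theta(m^c)$ (and absorbing the constant factor $r$ into the vertex count), the $p$-th summand is $\Theta(2^{p(r-1)}(n/2^p)^c)=\Theta(n^c\,2^{-p(c-(r-1))})$, a geometric series with common ratio $2^{-(c-(r-1))}$. When $c=r-1$ the ratio is $1$: each of the $O(\log n)$ terms is $\Theta(n^{r-1})=\Theta(z_{\to}(n,F))$, and the total is $O(z_{\to}(n,F)\log n)$. When $c>r-1$ the ratio is strictly less than $1$: the series is dominated by its $p=0$ term $\Theta(n^c)=\Theta(z_{\to}(n,F))$, giving $O(z_{\to}(n,F))$. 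This is exactly the claimed dichotomy.

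The hard part will be the sharp count of occupied tuples: the naive bound $\binom{2^p}{r}=\Theta(2^{pr})$ would place the threshold at $c=r$, and it is precisely the sibling-pair constraint forced by minimality of the separating scale that saves a factor of $2^p$ and lowers the threshold to the stated $c=r-1$. A secondary point requiring care is that the geometric summation relies on an order-of-magnitude estimate $z_{\to}(m,F)=\Theta(m^c)$ for the growth rate $c$; monotonicity of $z_{\to}$ alone does not suffice, since $\sum_p 2^{p(r-1)}$ already contributes a spurious factor $n^{r-1}$.
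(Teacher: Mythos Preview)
The paper does not prove this theorem at all; it is quoted from \cite{FJKMV2} and used as a black box, so there is no in-paper argument to compare against. Your dyadic-interval decomposition with the separating-scale/sibling-pair count is exactly the natural hypergraph extension of the Pach--Tardos argument, and the crucial observation---that minimality of the separating scale forces two of the $r$ occupied blocks to be siblings, cutting the number of block-tuples from $\binom{2^p}{r}$ down to $O(2^{p(r-1)})$---is correct and is indeed the heart of the matter.

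The issue you flag at the end, however, is a genuine gap, not a cosmetic one. Your master inequality
\[
\ex_{\to}(n,F)\ \le\ C\sum_{p} 2^{p(r-1)}\,z_{\to}(rn/2^p,F)
\]
is fine, but to sum it you silently upgraded the hypothesis $z_{\to}(n,F)=\Omega(n^c)$ to $z_{\to}(m,F)=\Theta(m^c)$ for all $m\le n$. Monotonicity of $z_{\to}$ gives only $z_{\to}(rn/2^p,F)\le z_{\to}(rn,F)$, and then $\sum_p 2^{p(r-1)}$ alone contributes a factor $n^{r-1}$, which is far too much; the trivial bound $z_{\to}(m,F)\le (m/r)^r$ likewise only yields $\ex_{\to}(n,F)=O(n^r)$. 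What you actually need to close the argument is an inequality of the shape
\[
z_{\to}(m,F)\ \le\ C_r\,(m/n)^{r-1}\,z_{\to}(n,F)\qquad(m\le n),
\]
i.e.\ that $z_{\to}(n,F)/n^{r-1}$ is essentially non-decreasing. This does not follow from $z_{\to}(n,F)=\Omega(n^c)$ and requires a separate construction-side argument (for instance, building an $F$-free interval-$r$-partite $r$-graph on $n$ vertices from an extremal one on $m$ vertices with the right edge multiplicity). Once such a growth-regularity lemma is in hand, both cases of your geometric summation go through exactly as you describe; without it, the proof is incomplete for the case $r-1<c<r$.
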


By Theorem~\ref{splitting1}, the following claim yields $\ex_{\rightarrow}(n, P_k^r) = O(n^{r-1}\log n)$ for all $k \ge 2$, i.e., the upper bound in  Theorem~\ref{cpthm} for $k \geq r + 2$.

\begin{prop} \label{secondclaim}
For $k \geq 1$, $r \geq 2$, $z_{\rightarrow}(n,P_k^r) = O(n^{r - 1})$.
\end{prop}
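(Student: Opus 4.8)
The plan is to exploit the interval chromatic number $r$ hypothesis to reinterpret a crossing path as a monotone ``coordinate walk'' and then bound the edges by a level decomposition. Write the parts as $A_1 < A_2 < \cdots < A_r$, so that every edge of the host graph $G$ is an increasing $r$-tuple with exactly one vertex in each $A_j$; call the vertex of $e$ lying in $A_j$ its \emph{$j$-th coordinate}. First I would record the order type of $P^r_k$: unwinding (i)--(iii) shows that the vertices $v_0,\dots,v_{k+r-2}$ are ordered by residue of the index modulo $r$ first and then by index, so consecutive edges $e_i,e_{i+1}$ of the tight path differ in exactly one coordinate, namely coordinate $s(i):=(i \bmod r)+1$, where moreover the value strictly increases.

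The key structural step is to prove the following characterization: a sequence of edges $e_0,\dots,e_{m-1}$ of $G$ is a copy of $P^r_m$ if and only if, for every $i$, the edge $e_{i+1}$ agrees with $e_i$ off coordinate $s(i)$ and is strictly larger in coordinate $s(i)$. The nontrivial direction is that this purely local rule (``one coordinate goes up, cycling through $1,2,\dots,r$'') is sufficient. A priori the order type of $P^r_m$ also forces long-range inequalities between non-consecutive edges — for instance the newly inserted bottom vertex of a late edge must lie below vertices that a higher part received much earlier. The crucial observation is that every such inequality compares a coordinate in some $A_j$ with a coordinate in a strictly later part $A_{j'}$, $j'>j$, and is therefore satisfied automatically because $A_j$ entirely precedes $A_{j'}$ in the vertex order. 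This is exactly where the interval chromatic number $r$ assumption is used, and I expect establishing this reduction cleanly — confirming that no genuine global constraint survives — to be the main obstacle.

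Granting the characterization, define $f(e)$ to be the number of edges in a longest crossing path of $G$ whose last edge is $e$; since $G$ is $P^r_k$-free we have $f(e)\in\{1,\dots,k-1\}$. Set $G_t=\{e: f(e)=t\}$. I claim that for fixed $t$ the projection forgetting coordinate $j:=s(t-1)$ is injective on $G_t$: if two edges $e,e'\in G_t$ agreed off coordinate $j$ and differed there, say with $e'$ larger in coordinate $j$, then since the move permitted at step $t$ is precisely an increase in coordinate $s(t-1)=j$, appending $e'$ to a longest crossing path ending at $e$ produces a crossing path of length $t+1$ ending at $e'$, whence $f(e')\ge t+1>t$, a contradiction. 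The projection of $G_t$ is a set of increasing $(r-1)$-tuples, so $|G_t|\le\binom{n}{r-1}$. Summing over the at most $k-1$ levels gives $z_{\rightarrow}(n,P^r_k)=\sum_{t=1}^{k-1}|G_t|\le (k-1)\binom{n}{r-1}=O(n^{r-1})$ for fixed $k$ and $r$ (the case $k=1$ being trivial since $P^r_1$ is a single edge). Once the equivalence of the second paragraph is in hand, the injectivity and the final count are routine.
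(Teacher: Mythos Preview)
Your proof is correct and uses the same engine as the paper: once you observe that in a host of interval chromatic number $r$ all the cross-part inequalities in the order type of $P^r_k$ hold automatically, a crossing $k$-path is exactly a sequence of edges advancing one coordinate at a time, cycling through $1,\dots,r$, each step strictly increasing. You spell this characterization out carefully; the paper leaves it implicit.

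The two executions differ only in packaging. The paper runs an induction on $k$: for every $(r-1)$-set $S$ it marks the edge $S\cup\{w\}$ with $w$ maximal, deletes the marked edges (at most $P:=\prod_i n_i\sum_i 1/n_i$ of them), finds a $P^r_{k-1}$ in the remainder by induction, and extends via the marked edge, giving $e(H)\le kP$. Your level-set argument via the potential $f(e)$ is the standard unrolling of such an induction --- your $G_t$ plays the role of the layer peeled at step $t$ --- with the refinement that you project away only the single relevant coordinate $s(t-1)$ rather than marking with respect to all $r$ coordinates at once, yielding $(k-1)\binom{n}{r-1}$. Both routes are short once the characterization is in hand; neither explicit constant dominates the other, and both are $O(n^{r-1})$.
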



{\bf Proof.} 
  We prove a stronger statement by induction on $k$: {\em if $H$ is an ordered $n$-vertex $r$-graph of interval chromatic number $r$ with $r$-partition $X_1,X_2,\dots,X_r$ of sizes $n_1,n_2,\dots,n_r$ respectively, and $H$ has no crossing $k$-path, then
$e(H) \le k P$ where
$$P=\prod_{i = 1}^r n_i \cdot \sum_{i=1}^r \frac{1}{n_i}.$$}
The base case $k=1$ is trivial. For the induction step, assume the result holds for paths of length at most $k-1$, and suppose $e(H) > kP$. For each $(r-1)$-set $S$ of vertices mark the edge $S \cup \{w\}$ where $w$ is maximum.  Let $H'$ be the $r$-graph of unmarked edges. Since we marked at most $P$ edges, $e(H') > (k-1)P$. By the  induction
assumption there exists a $P^r_{k-1} =v_1 v_2 \ldots v_{k+r-2}  \subset H'$ and we can extend this to a $P^r_k$ in $H$ using the marked edge obtained from the $(r-1)$-set $\{v_{k}, \ldots, v_{k+r-2}\}$. This proves the proposition. \qed

\subsection{Lower bound for $k \geq r + 2$}\label{orderedconstruction}

We now turn to the lower bound in Theorem~\ref{cpthm}.
Let $G(n,r,r+2)$ be the family of $r$-tuples $(a_1,\ldots,a_r)$
of positive integers such that
\begin{center}
\begin{tabular}{lp{5in}}
$(a)$ & $1\leq a_1<a_2<\cdots<a_r\leq n$ and \\
$(b)$ & $a_{2}-a_1=2^p$, where $p\leq \log_2 (n/4)$ is an integer.
\end{tabular}
\end{center}
The number of choices of $a_1\leq n/4$ is $n/4$,  then the number of choices of $a_2$ is $\log_2 (n/4)$, and
the number of choices of the remaining $(r-2)$-tuple $(a_3,\ldots,a_r)$ is at least ${n/2\choose r-2}$.
Thus if $r\geq 3$ and $n>20r$, then
\begin{equation}\label{j15}
|G(n,r,r+2)|\geq \frac{n^{r-1}}{(r-2)!3^{r}}\log_2 n.
\end{equation}

Suppose $G(n,r,r+2)$ contains a $P_{r+2}^r$ with  vertex set  $\{a_1, \ldots, a_{2r+1}\}$ and edge set
 $\{a_i\ldots a_{i+r-1}: 1 \le i \le r+2\}$. By the definition of ordered path, the vertices are in the following order on $[n]$:
  \begin{equation}\label{j16}
  a_1<a_{r+1}<a_{2r+1}<a_2<a_{r+2}<a_3<a_{r+3}<\ldots<a_r<a_{2r}.
 \end{equation}
 Hence the 2nd, $r+1$st and $r+2$nd edges are
$$\{a_{r+1},a_2,a_3 \ldots, a_{r}\}, \qquad \{a_{r+1}, a_{r+2}\ldots, a_{2r}\}, \qquad \{a_{2r+1},a_{r+2}, \ldots, a_{2r}\}.
$$
 The differences between the second and the first coordinates  in these three vectors are
$$d_1=a_{2}-a_{r+1} , \qquad  d_2= a_{r+2}-a_{r+1}, \qquad d_3= a_{r+2}-a_{2r+1}.$$
By~(\ref{j16}), we have $d_1,d_3<d_2<d_1+d_3$ so it is impossible that all the three differences $d_1, d_2, d_3$ are  powers of two.
This yields the lower bound in Theorem~\ref{cpthm} for $k\geq r+2$. \qed

\section{Proof of Theorem \ref{cgthm}}
We begin with the  upper bounds when $r+1< k \le 2r-1$.

\begin{definition}
	An ordered $r$-graph $F$ is a {\em split hypergraph} if there is a partition of $V(F)$ into intervals $X_1<X_2<\dots<X_{r - 1}$ and there exists $i \in [r-1]$ such that  every edge of $F$ has two vertices in $X_i$ and one vertex in every $X_j$ for $j \ne i$.
\end{definition}

Every $r$-graph of interval chromatic number $r$ is a split hypergraph (but not vice versa). We write $e(H)$ for the number of edges in a hypergraph $H$, $v(H) = \bigl|\bigcup_{e \in H} e\bigr|$ and $d(H) = e(H)/v(H)^{r - 1}$.
The function $d(H)$ could be viewed as a normalized average degree of $H$. We require the following nontrivial result about split hypergraphs.

\begin{thm} [\cite{FJKMV2}, Theorem 1.2] \label{splitter}
	For $r\geq 3$  there exists $c=c_r>0$ such that every ordered $r$-graph $H$ contains a split subgraph $G$  with $d(G) \geq c\, d(H)$.
\end{thm}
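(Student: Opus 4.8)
The plan is to reformulate the statement so that a split subgraph $G$ is specified by a partition of a subset of $V(H)=[n]$ into $r-1$ intervals $X_1 < X_2 < \dots < X_{r-1}$ together with a doubled index $i$, and to show that among all such choices one achieves $e(G) \ge c_r\, d(H)\, v(G)^{r-1}$, equivalently $d(G) \ge c_r\, d(H)$. A single threshold already produces the two intervals needed when $r = 3$, but for larger $r$ the $r-1$ intervals must be built recursively. Throughout it is essential to exploit the freedom to pass to a subgraph on fewer than $n$ vertices: a hypergraph all of whose edges are tightly packed (on consecutive vertices) has small global density $d(H)$ and is best handled by a small, locally dense split subgraph rather than by trying to capture a constant fraction of all of its edges.

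I would argue by a double induction, on $r$ and on $n = v(H)$. Fix a balanced threshold $t \approx n/2$, write $L = [1,t]$ and $R = (t,n]$, and classify each edge by $a = |e \cap L| \in \{0,1,\dots,r\}$. If at least half the edges lie entirely on one side ($a \in \{0,r\}$), then that side carries $\ge m/4$ edges on $\le n/2$ vertices, hence local density at least $2^{r-3} d(H) \ge d(H)$ for $r \ge 3$; induction on $n$ applied to that side yields a split subgraph of density $\ge c_r\, d(H)$, and the strict decrease of the vertex set guarantees termination. This is the mechanism that resolves packed edges: repeatedly zooming into the denser half keeps the local density at least $d(H)$ until a constant fraction of the remaining edges cross the cut.

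Otherwise at least half the edges cross, so some type $a \in \{1,\dots,r-1\}$ contains $\ge \tfrac{m}{2(r-1)}$ edges. For $r = 3$, and more generally for the extreme types $a = 1$ and $a = r-1$, the reduction is clean: an edge of type $a=1$ has its unique small vertex in $L$ and its remaining $r-1$ vertices in $R$, so setting $X_1 = L$ and seeking a split $(r-1)$-subgraph of the projections $\{e \cap R\}$ inside $R$ (with doubled index among the $R$-coordinates) invokes the induction on $r$, and $a = r-1$ is symmetric. The intermediate types $2 \le a \le r-2$, which occur only for $r \ge 4$, are the crux: here the doubled pair lies on one side while the other side must be cut into a transversal (interval-chromatic) pattern, and the two sides are coupled edge by edge. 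Handling them requires a transversal analogue of the theorem together with a joint partition of $L$ and $R$ that realizes both patterns simultaneously.

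The step I expect to be the main obstacle is exactly this coupling, compounded by the need to control $v(G)$. When edges are projected onto one side, several may share a projection, so the inductive statement must be strengthened to a weighted version in which each $(r-1)$-set carries the number of ways to complete it across the cut; and when the two recursively obtained structures are reassembled, the vertex count $v(G) = v(G_L) + v(G_R)$ must be balanced against the captured weight so that the product of the per-level constants remains bounded below by a single $c_r > 0$ rather than degrading with $n$ (which would only give the weaker $O(\log n)$-type loss of Theorem~\ref{splitting1}). I would therefore prove the weighted two-sided statement and choose $t$ not merely balanced but so as to equalize the two sides' contributions, keeping the recursion depth in $r$ bounded and the vertex inflation under control.
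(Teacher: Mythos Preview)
The paper does not prove Theorem~\ref{splitter}; it is quoted from the companion paper~\cite{FJKMV2} and used as a black box in the proof of Proposition~\ref{prop}. There is therefore no proof in this paper to compare your proposal against.

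Judged on its own, your proposal is an outline rather than a proof, and the gap is exactly where you locate it. For $r=3$ the dichotomy is complete: types $a\in\{1,2\}$ already yield a split subgraph on the two intervals $L<R$, and the $n$-halving recursion does not lose density since $2^{r-3}\ge 1$. For $r\ge 4$ the intermediate types $2\le a\le r-2$ are not handled; you describe the needed ingredient (a weighted, two-sided transversal/split statement with a balanced choice of threshold) but do not supply it, and that is precisely the nontrivial content of the theorem.

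There is also a gap in the extreme cases $a\in\{1,r-1\}$ as you have written them. Projecting the $R$-parts to an $(r-1)$-graph and applying induction on $r$ returns a split subgraph $G'$ on some $V(G')\subseteq R$ with $d(G')\ge c_{r-1}\,d(H_R)$; but when you lift $G'$ back to $r$-edges you must include all $L$-vertices that occur, and you have no control over how many there are relative to $v(G')$. Since $d(G)=e(G)/v(G)^{r-1}$, adding a large $L$-side to a small $V(G')$ can destroy the density bound. Fixing this is exactly why you need the weighted strengthening and a careful coupling of the two sides, which you acknowledge but do not carry out. As it stands the argument does not establish $d(G)\ge c_r\,d(H)$ for any $r\ge 4$.
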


\begin{prop} \label{prop}
For $r\ge 3$ there exists $C=C_r>0$ such that, if $r+1<k\le 2r-1$, then	$\ex_{\cir}(n, P_k^r)\le k C\, n^{r-1}$.
	\end{prop}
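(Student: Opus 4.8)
The plan is to pass from the convex geometric setting to a dense \emph{split} hypergraph and then embed a crossing path using the cyclic order. First I would explain why one cannot merely cut: cutting the cyclic order of $H$ at an arbitrary gap yields an ordered $r$-graph $H^{*}$ with the same edges, and every linear crossing $k$-path of $H^{*}$ is a cyclic crossing $k$-path of $H$ (the chains in Definition~\ref{defCP} only become easier to satisfy after closing the order up), so $H^{*}$ is $P^r_k$-free. But this gives only $e(H)=e(H^{*})\le\ex_{\to}(n,P^r_k)=O(n^{r-1}\log n)$ by Theorem~\ref{cpthm}, with the unwanted $\log n$ arising from the interval-chromatic decomposition behind Theorem~\ref{splitting1}. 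To remove the logarithm I would retain the cyclic structure rather than discard it.

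So let $H$ be $P^r_k$-free, cut at an arbitrary gap to get $H^{*}$ with $d(H^{*})=e(H)/n^{r-1}$, and apply Theorem~\ref{splitter} to extract a split subgraph $G\subseteq H^{*}$ with interval partition $X_1<\dots<X_{r-1}$, doubled index $i$, and $d(G)\ge c_r\,d(H^{*})$. The key point is that the $X_j$ are cyclic arcs of $\vb{\Omega}_n$ separated by the fixed cut, so a \emph{cyclic} $P^r_k$ inside $G$ is automatically a cyclic $P^r_k$ of $H$. Thus the proposition reduces to an embedding lemma: there is $\sigma_r>0$ so that any split $r$-graph $G$ with $d(G)\ge k\sigma_r$ contains a crossing $k$-path. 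Granting this, if $e(H)>kC_r n^{r-1}$ with $C_r:=\sigma_r/c_r$, then $d(G)\ge c_r d(H^{*})\ge k\sigma_r$, producing a $P^r_k$ in $H$, a contradiction.

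For the embedding lemma I would first determine the shape $P^r_k$ must take in $G$. Consecutive edges of a tight path differ by deleting $v_s$ and inserting $v_{s+r}$, while every edge of a split hypergraph has the same profile (one vertex in each $X_j$, two in $X_i$); hence $v_m$ and $v_{m+r}$ lie in the same part, so the part assignment is constant on each residue class $R_s=\{v_s,v_{s+r},v_{s+2r},\dots\}$. By Definition~\ref{defCP} each $R_s$ fills the single cyclic arc running from $v_s$ to $v_{s+1}$ (frame indices read cyclically), so the $r$ residue classes form $r$ consecutive arcs mapped onto the $r-1$ parts, with $X_i$ receiving the two consecutive classes meeting inside the doubled interval. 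The hypothesis $k\le 2r-1$ is exactly what keeps this realizable: each $R_s$ then holds at most three path vertices and the two classes assigned to $X_i$ can be taken adjacent, so the cyclic layout fits the arc decomposition $X_1<\dots<X_{r-1}$ together with the cut. With the target profile fixed, I would build the path by induction on $k$, in the style of Proposition~\ref{secondclaim}: for each relevant $(r-1)$-configuration mark the edge whose free vertex is extreme in the appropriate arc, delete the $O(v(G)^{r-1})$ marked edges, find a crossing $(k-1)$-path in the remainder by induction, and re-attach a marked edge to obtain $P^r_k$. Choosing the threshold linear in $k$ guarantees enough edges survive each of the (at most $k$) deletions, which is why $C_r$ depends only on $r$.

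The main obstacle is the embedding lemma, and inside it the verification that the periodic/arc description of $P^r_k$ is realizable in a dense split hypergraph \emph{precisely} when $k\le 2r-1$. Concretely, one must check that the ordering constraints (i)--(iii) are compatible with the two residue classes sitting in the doubled interval $X_i$ in the correct relative order and with the fixed position of the cut, and that the greedy extension never calls for a vertex in an arc already exhausted by earlier steps; it is this extension that fails for $k\ge 2r$, matching the emergence of the $\log n$ factor in Theorem~\ref{cgthm}. The remaining work is bookkeeping: making the density threshold explicit so that the number of marked edges, and hence $\sigma_r$ and $C_r$, depend only on $r$.
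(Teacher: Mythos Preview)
Your plan is correct and is essentially the paper's own proof: cut the cycle, apply the splitter theorem (Theorem~\ref{splitter}) to pass to a dense split subgraph, then grow the crossing path by induction on $k$, at each step marking and deleting the edge with extreme vertex in the appropriate arc and re-attaching it. The one place where your sketch is vaguer than the paper's execution is the step where the new vertex lands in the doubled interval $X$: there the marking rule has to be modified (mark the largest vertex in $X$ that is \emph{below} the existing $X$-vertex of the boundary set) so that the new vertex $v_{2r-2}$ falls strictly between $v_{r-2}$ and $v_{r-1}$, and this is precisely the case analysis that breaks when $k\ge 2r$.
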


\proof
Let $c=c_r$ be the constant from Theorem~\ref{splitter} and let $C=1/c$. Given a convex geometric $r$-graph $H$ with $e(H) > k \,C n^{r-1}$, we view $H$ as a linearly ordered $r$-graph (by ``opening up" the circular ordering between any two vertices) and apply Theorem~\ref{splitter}  to obtain a split subgraph $G \subset H$
where $e(G) > k m^{r-1}$ where $m=v(G)$. Now, viewing $H$ once again as a convex geometric $r$-graph, let $X_0 < X_1 < \dots < X_{r-3} < X$ be cyclic intervals such that every edge of $G$ contains two vertices in
$X$ and one vertex in each $X_i : 0 \leq i \leq r - 3$. Our main assertion is the following:

	 For $k \in [2r-1]$, $G$ contains a crossing $k$-path $v_0 v_1 \ldots v_{k+r-2}$ such that
	
$\bullet$  $v_i \in X_i$ for $i\not \equiv -1, -2 \mod r$ and

	$\bullet$  $v_i \in X$ for $i \equiv -1, -2 \mod r$.

To prove this assertion  we proceed by induction on $k$, where the base case $k = 1$ is trivial. For the induction step, suppose that $1\le k \le 2r-2$, and we have proved the result for $k$ and we wish to prove it for $k+1$. Suppose that $k \equiv i \not\equiv 0, -1$ (mod $r$) where $0 \le i<r$. For each $f \in \partial G$ that has no vertex in $X_{i-1}$,  delete the edge $f \cup v \in G$
where $v$ is the largest vertex in $X_{i-1}$ in clockwise order. Let $G'$ be the subgraph that remains after deleting these edges. Then
$$e(G')\ge e(G)-m^{r-1}>(k+1)m^{r-1}-m^{r-1}=km^{r-1},$$ so by induction $G'$ contains  a $P_{k}^r$  with vertices $v_0, v_1, \ldots, v_{k-1},\ldots, v_{k+r-2}$, where $v_i \in X_i$ for $i\not \equiv -1, -2$ (mod $r$) and $v_i \in X$ for $i \equiv -1, -2$ (mod $r$). Our goal is to add a new vertex $v$ to the end of the path where $v \in X_{i-1}$. Let $v=v_{k+r-1}$ be the vertex in $X_{i-1}$ for which the edge $e_k=v_k v_{k+1} \ldots v_{k+r-1}$ was deleted in forming $G'$. Note that $v$ exists as $v_{k-1} v_k \ldots v_{k+r-2} \in E(G)$ and so $v_k \ldots v_{k+r-2} \in \partial G.$
Adding vertex $v$ and edge  $e_k$ to our copy of $P_k^r$ yields a copy of $P_{k+1}^r$ as required.

Next suppose that $i \equiv 0,-1$ (mod $r$). Proceed exactly as before except we modify the definition of $G'$ slightly as follows: for every $f \in \partial G$ which has exactly one vertex in each $X_i$ and in $X$, if $w$ is the vertex of $f$ in $X$, then delete $f \cup v \in G$ where $v$ is the largest such vertex in $X$ satisfying $v<w$.

By induction,  $G'$ contains a $P_{k}^r$   with vertices $v_0, v_1, \ldots, v_{k-1},\ldots, v_{k+r-2}$, where $v_i \in X_i$ for $i\not \equiv -1, -2$ (mod $r$) and $v_i \in X$ for $i \equiv -1, -2$ (mod $r$). Our goal is to add a new vertex $v$ to the end of the path where $v \in X$ so we may assume that $k\in \{r-1, r\}$, and we are trying to find vertex $v$ which we will label as $v_{k+r-1} \in \{v_{2r-2}, v_{2r-1}\}$ as above with  $v \in X$. Note that we already have the two vertices $v_{r-2} < v_{r-1}$ in $X$. So we either want to add $v_{2r-2}$ satisfying
 $v_{r-2} < v_{2r-2}< v_{r-1}$ or we want to add $v_{2r-1}$ satisfying
 $v_{r-2} < v_{2r-2}< v_{r-1} < v_{2r-1}$.
 Suppose that $k=r-1$ so that we are in the first case.
 Since $v_{r-2} \ldots v_{2r-3} \in E(G')$, the $(r-1)$-set $f =v_{r-1} \ldots v_{2r-3}$  has exactly one vertex  $v_{r-1} \in X$.  Since $f \cup \{v_{r-2}\}  = v_{r-2}v_{r-1} \ldots v_{2r-3} \in E(G')$, we have $f \in \partial G$ and moreover $v_{r-2}$ was not deleted from $f \cup \{v_{r-2}\}$ if forming $G'$.  Hence there is a vertex $v \in X$ with $v_{r-2}<v<v_{r-1}$ such that the edge $f \cup \{v\}  = v_{r-1} \ldots v_{2r-3}v \in E(G)$
 and the vertex $v$ and edge  $f \cup \{v\}$ can be used to extend the $P_k^r$ to a $P_{k+1}^r$. For the case $k=r$, we choose $v$ to be the largest vertex in $X$ in defining $G'$ and apply an identical argument to that when $i\not \equiv -1, -2$ (mod $r$) . \qed

Next we give lower bounds for $k \geq 2r$.

\begin{prop} \label{prop2}
	For $k \ge 2r\ge 4$ we have $\ex_{\cir}(n, P_k^r) =\Omega(n^{r-1} \log n).$
	\end{prop}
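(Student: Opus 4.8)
The plan is to produce a single convex geometric $r$-graph on $[n]$ with $\Omega(n^{r-1}\log n)$ edges that contains no $P^r_{2r}$, and then invoke monotonicity: the first $2r$ edges of any crossing $k$-path form a crossing $2r$-path, so every copy of $P^r_k$ with $k\ge 2r$ contains a copy of $P^r_{2r}$. Hence a $P^r_{2r}$-free $r$-graph is automatically $P^r_k$-free for all $k\ge 2r$, and the single construction yields $\ex_{\cir}(n,P^r_k)\ge \ex_{\cir}(n,P^r_{2r})=\Omega(n^{r-1}\log n)$ for every such $k$.

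For the construction I would reuse the dyadic family from Section~\ref{orderedconstruction}. Cut the cycle at a fixed point to obtain the linear order $1<2<\dots<n$, and let $G$ be the convex geometric $r$-graph consisting of all $r$-sets $\{a_1<a_2<\dots<a_r\}\subseteq[n]$ for which $a_2-a_1=2^p$ for some integer $p$. Exactly the counting behind~(\ref{j15}) applies — about $n/4$ choices for $a_1$, about $\log_2 n$ dyadic choices for $a_2$, and $\Omega(n^{r-2})$ choices for the remaining coordinates — so $e(G)=\Omega(n^{r-1}\log n)$. The whole content is therefore to show that $G$ contains no $P^r_{2r}$.

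To analyze a putative copy of $P^r_{2r}$, I would first read off from Definition~\ref{defCP} the clockwise order of its $3r-1$ vertices $v_0,\dots,v_{3r-2}$, namely
\[
v_0,\,v_r,\,v_{2r},\ v_1,\,v_{r+1},\,v_{2r+1},\ \dots,\ v_{r-2},\,v_{2r-2},\,v_{3r-2},\ v_{r-1},\,v_{2r-1}.
\]
This is forced, since condition (ii) with $j=i$ inserts $v_{r+i},v_{2r+i}$ between $v_i$ and $v_{i+1}$, and condition (iii) puts $v_{2r-1}$ just after $v_{r-1}$. If all $2r$ edges lay in $G$, then for each edge the gap between its two smallest vertices would be a power of two. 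The crucial point is that a crossing $2r$-path winds around the cycle essentially twice, which forces three of its edges — I expect three suitably chosen consecutive edges $e_{i-1},e_i,e_{i+1}$, whose shared vertices make their designated gaps equal to three arcs $d_1,d_2,d_3$ satisfying $d_1,d_3<d_2<d_1+d_3$ — to have powers of two for all three gaps. Exactly as in Section~\ref{orderedconstruction}, this overlapping-interval relation is impossible for powers of two: if $d_2=2^c$ and $d_1,d_3<2^c$ then $d_1+d_3\le 2^c=d_2$, a contradiction.

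The hard part will be the cyclic bookkeeping. Unlike the ordered case, the ``two smallest vertices'' of an edge depend on where the fixed cut of $[n]$ falls relative to the copy of $P^r_{2r}$: the designated gap of each edge is whichever of its arcs avoids the cut, so the clean consecutive triple realizing $d_1,d_3<d_2<d_1+d_3$ is only guaranteed for some positions of the cut. I would dispose of the remaining positions by a short case analysis on the arc of the copy that contains the cut; in those cases the tight-path incidences still impose several additive relations among four conditioned gaps (already for $r=2$ one is squeezed into $2^b<2^a<2^c+2^b$ together with $2^c<2^d<2^c+2^b$, which forces both $b<c$ and $c<b$), and the uniqueness of binary representations again makes the system infeasible. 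Checking that it is precisely the \emph{second} winding — i.e.\ the hypothesis $k\ge 2r$ rather than $k\ge r+2$ — that creates such a configuration (for $k\le 2r-1$ the path winds less than twice and $G$ genuinely contains $P^r_k$, consistent with the $\Theta(n^{r-1})$ bound in Theorem~\ref{cgthm}) is the conceptual heart, and the step I expect to require the most care.
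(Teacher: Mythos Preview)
Your approach is the paper's: the same dyadic family $G(n,r,r+2)$ from Section~\ref{orderedconstruction}, the same count~(\ref{j15}), and the same power-of-two contradiction $d_1,d_3<d_2<d_1+d_3$. Two corrections are worth making.

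First, the three edges that generate the contradiction are \emph{not} consecutive. Already in the ordered argument the relevant edges are the $2$nd, $(r+1)$st and $(r+2)$nd; consecutive edges share $r-1$ vertices and their first-two-coordinate gaps need not be distinct, so ``three suitably chosen consecutive edges $e_{i-1},e_i,e_{i+1}$'' will not work.

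Second, and more to the point, the cyclic bookkeeping you flag as the hard part is handled in the paper by a single observation rather than a case analysis. Writing the path as $w_1\dots w_{3r-1}$, let $j\in\{0,\dots,r-1\}$ be such that $w_{r+j}$ is the \emph{leftmost} (in the linear order on $[n]$) vertex of the middle block $\{w_r,w_{r+1},\dots,w_{2r-1}\}$. Then, regardless of where the cut falls, the three edges $A_{j+1}$, $A_{j+r}$, $A_{j+r+1}$ (indices $r-1$ apart between the first two) have, when written with increasing coordinates, first two entries $(w_{j+r},w_{j+1})$, $(w_{j+r},w_{j+r+1})$ and $(w_{j+2r},w_{j+r+1})$ respectively. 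The designated gaps are then
\[
d_1=w_{j+1}-w_{j+r},\qquad d_2=w_{j+r+1}-w_{j+r},\qquad d_3=w_{j+r+1}-w_{j+2r},
\]
and the cyclic order you wrote down gives $d_1,d_3<d_2<d_1+d_3$ directly. So anchoring on the minimum of the middle block replaces your proposed case analysis entirely; the ``step requiring the most care'' is a one-line choice of $j$.
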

We take the same family $G(n,r,r+2)$ as used for ordered hypergraphs (see Section \ref{orderedconstruction}),
but with the cyclic ordering of the vertex set.  When we have a $k$-edge crossing path $P=w_1w_2\ldots w_{r+k-1}$, the vertex $w_1$ does not need to be the leftmost in the first
edge $w_1\ldots w_r$, so the argument in Section~\ref{orderedconstruction} does not go through for $k=r+2$. In fact,   $G(n,r,r+2)$ does contain
$P_k^r$ for $k \leq 2r-1$.

However,
 suppose $G(n,r,r+2)$ has  a crossing $2r$-edge path $P=w_1\ldots w_{3r-1}$, and the $i$th edge of the path is $A_i=w_iw_{i+1}\ldots w_{i+r-1}$.
 Suppose vertex $w_{r+j}$ is the leftmost
in the set $\{w_{r},w_{r+1},\ldots,w_{2r-1}\}$. 
 Then writing the edges $A_{j+1}, A_{j+r}$ and $A_{j+r+1}$ as vectors with increasing coordinates, we have 
$$A_{j+1}=\{w_{j+r}, w_{j+1},w_{j+2}, \ldots, w_{j+r-1}\}, \quad A_{j+r}= \{w_{j+r}, w_{j+r+1}\ldots, w_{j+2r-1}\}, $$
$$\mbox{and }\quad
A_{j+r+1}=\{w_{j+2r},w_{j+r+1},w_{j+r+2}, \ldots, w_{j+2r-1}\}.
$$
 The differences between the second and the first coordinates  in these three vectors are
$$d_1=w_{j+1}-w_{j+r} , \qquad  d_2= w_{j+r+1}-w_{j+r}, \qquad d_3= w_{j+r+1}-w_{j+2r}.$$
As at the end of Section~\ref{orderedconstruction},
it is impossible that all the  differences $d_1, d_2, d_3$ are  powers of two.
 \qed
\medskip

{\bf Proof of Theorem~\ref{cgthm}.}
Proposition~\ref{prop} yields $C=C_r$ such that
\[ \ex_{\cir}(n,P_k^r) \leq kC\, n^{r-1}\]
for $k \le 2r-1$.
Since the family of all $r$-subsets of $[n]$ containing $1$ witnesses that
 for $k\geq 3$, $r \geq 2$, $\ex(n,P_k^r) = \Omega(n^{r - 1})$, and
$\ex_{\cir}(n,P_k^r) \geq \ex(n,P_k^r)$, we get $\ex_{\cir}(n,P_k^r) = \Theta(n^{r - 1})$ for $3\leq k \leq 2r - 1$. In the case $k = r + 1$, Theorem~\ref{cpthm} gives $$\ex_{\cir}(n,P_{r+1}^r) \leq \ex_{\rightarrow}(n,P_{r+1}^r) = {n \choose r} - {n - r \choose r}.$$ On the other hand, since $P_{r+1}^r\supseteq M_r^2$ and    $G(n, r, r+1) \not\supseteq M_r^2$,
$$\ex_{\cir}(n,P_{r+1}^r) \geq \ex_{\cir}(n,M_r^2) =\ex_{\to}(n, M_r^2) \ge |G(n, r, r+1)| = {n \choose r} - {n - r \choose r},$$ so the second statement in Theorem \ref{cgthm} follows.
It remains to consider  $k \geq 2r$, and here we have $$\ex_{\cir}(n,P_k^r) \leq \ex_{\rightarrow}(n,P_k^r) = O(n^{r - 1}\log n)$$ from Theorem \ref{cpthm} and a  lower bound from Proposition~\ref{prop2}. \qed

\section{Proof of Theorem~\ref{maincrossing_short}}

\subsection{Upper bound in Theorem~\ref{maincrossing_short} for $r \ge k \ge 2$}

Let us first prove the upper bound
\begin{equation}\label{j5}
\ex_{\circlearrowright}(n,P_k^r) \le
\frac{(k-1)(r-1)}{r}{n \choose r-1} \qquad \hbox{($2 \le k \le r$)}.
\end{equation}

Recall that our notation for a crossing $k$-path $P_k^r$ ($k \le r$) on a cyclically ordered vertex set $\vb{\Omega}_n$ is the following: the vertices $v_1, v_2, \ldots, v_{r+k-1}$ form a tight path with edges $e_i=\{v_i, \ldots, v_{i+r-1}\}$, $i \in [k]$ and the (clockwise) ordering of the vertices on
$\vb{\Omega}_n$ is
$$v_1<v_{r+1}< v_2<v_{r+2}<\cdots <v_{k-1}<v_{r+k-1}<v_k<v_{k+1} <\cdots < v_r \enskip (< v_1).$$
We define $T_k(H)$ to be the set of $(v_{k}, \ldots, v_{r+k-1}) \in V(H)^{r}$ for which
there is a $P_k^r$ in $H$ with vertices $v_1, \ldots, v_{r+k-1}$ as ordered above.
In other words, $T_k(H)$ is the set of ending edges for a $P_k^r$ in $H$.

\begin{thm}\label{zigzag}
	Let $r \ge 2$ and $1\le k\le r$. Then for any  cg $r$-graph $H$ on $\vb{\Omega}_n$,
	\[ |T_k(H)| \geq r \cdot e(H) - (r - 1)(k - 1)\cdot |\partial H|.\]
	In particular, if $H$ contains no $P_k^r$, then
	\[ e(H) \leq \frac{(k-1)(r-1)}{r}|\partial H| \leq \frac{(k - 1)(r - 1)}{r}{n \choose r - 1}.\]
\end{thm}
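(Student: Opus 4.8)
The plan is to prove the displayed inequality $|T_k(H)| \ge r\,e(H) - (r-1)(k-1)|\partial H|$ by induction on $k$, after which the ``in particular'' clause is immediate: if $H$ contains no $P_k^r$ then no crossing $k$-path ends anywhere, so $T_k(H) = \emptyset$, and the inequality forces $r\,e(H) \le (r-1)(k-1)|\partial H| \le (r-1)(k-1)\binom{n}{r-1}$, which is the claimed bound.

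The first thing I would record is a reinterpretation of $T_k(H)$. A tuple $(v_k,\dots,v_{r+k-1})$ is nothing more than the last edge $e_k$ together with a distinguished root vertex: reading $e_k$ clockwise starting from $v_k$ produces exactly this tuple, and conversely an edge with a chosen starting vertex determines the tuple. Thus $T_k(H)$ is the set of rooted edges $(e,\rho)$ that arise as the terminal (edge, root) pair of some crossing $k$-path in $H$. In the base case $k=1$ every rooted edge qualifies, since setting $v_1=\rho$ and labelling the rest of $e$ clockwise gives a $P_1^r$; hence $|T_1(H)| = r\,e(H)$, which matches the right-hand side when $k=1$.

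For the inductive step I would compare $T_{k-1}(H)$ with $T_k(H)$ and establish $|T_k(H)| \ge |T_{k-1}(H)| - (r-1)|\partial H|$, so that iterating from the base case yields the theorem. The mechanism is an extension map whose design rests on one structural fact read off from Definition~\ref{defCP} and the ordering displayed just before the statement: passing from the ordering of $P_{k-1}^r$ to that of $P_k^r$ merely inserts the single new vertex $v_{r+k-1}$ into the clockwise gap strictly between the old root $v_{k-1}$ and its successor $v_k$ in the last edge, and replaces $e_{k-1}$ by $e_k=(e_{k-1}\setminus\{v_{k-1}\})\cup\{v_{r+k-1}\}$ with new root $v_k$. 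Accordingly, given a terminal rooted edge $(e,\rho)\in T_{k-1}(H)$, I write $S=e\setminus\{\rho\}\in\partial H$ and let $\rho^+$ be the clockwise successor of $\rho$ in $e$; the path extends precisely when some vertex $w$ in the open arc $(\rho,\rho^+)$ satisfies $S\cup\{w\}\in H$, and I then set $\Phi(e,\rho)=(S\cup\{w\},\rho^+)\in T_k(H)$, taking $w$ to be the first such neighbor of $S$ clockwise after $\rho$.

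Two verifications remain, and this bookkeeping is the main obstacle. First, the failures are few: fixing $S\in\partial H$, its $r-1$ vertices cut the circle into $r-1$ arcs, and inside each arc the only rooted edge $(S\cup\{w\},w)$ that cannot be extended is the one where $w$ is the \emph{last} neighbor of $S$ in that arc, so at most $r-1$ members of $T_{k-1}(H)$ per shadow are non-extendable, giving at most $(r-1)|\partial H|$ in total. Second, $\Phi$ is injective: from an image $(S\cup\{w\},\rho^+)$ one recovers $w$ as the clockwise predecessor of the root $\rho^+$ in the edge, hence recovers $S$ and the arc, and then recovers the original root $\rho$ as the neighbor of $S$ immediately preceding $w$ in that arc. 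Together these give $|T_k(H)|\ge|T_{k-1}(H)|-(r-1)|\partial H|$. The genuinely delicate point, which is where the exact ordering in Definition~\ref{defCP} must be used, is confirming that inserting $w$ into the gap $(\rho,\rho^+)$ — an interval containing no other path vertex — indeed produces a valid crossing $k$-path rather than merely a tight path with the correct local comparison.
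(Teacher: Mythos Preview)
Your proof is correct and follows essentially the same approach as the paper's: induction on $k$ with base case $|T_1(H)|=r\,e(H)$, and an inductive step via an injection from the extendable part of $T_{k-1}(H)$ into $T_k(H)$, where the extension replaces the root by the nearest valid new vertex in the gap $(v_{k-1},v_k)$ and the non-extendable rooted edges are bounded by $(r-1)|\partial H|$ via the shadow. Your rooted-edge reformulation and your explicit inverse (recover $w$ as the predecessor of the new root, then $\rho$ as the $H$-neighbor of $S$ immediately preceding $w$) are exactly the mechanism the paper uses, stated a bit more transparently; the ``delicate point'' you flag is handled just as in the paper by the observation that for $k\le r$ the gap $(v_{k-1},v_k)$ contains no other path vertex.
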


\begin{proof} We proceed by induction on $k$. For $k = 1$, and each edge $e \in E(H)$, the number of copies of $P_1^r$ with edge set $\{e\}$ is $r$,
	since after choosing which vertex of $e$ to label with $v_1$, the order of the remaining vertices of $e$ is determined (they are cyclically ordered). Therefore $|T_1(H)| \geq re(H)$. Suppose $k \geq 2$ and assume by induction that $|T_{k-1}(H)| \geq r e(H) - (r - 1)(k - 2)|\partial H|$.
	Let $L$ be  the collection of $r$-sets in $T_{k-1}(H)$ with the following property: The elements of $L$ are
	$$e= x_{r+1}< \cdots < x_{r+k-1}<x_k<\cdots <x_r$$ where $e \in E(H)$ and there does not exist any vertex $x$ such that $x_k<x<x_{k+1}$ and
	$e-\{x_k\} \cup \{x\} \in E(H)$. Observe that $|L|\le (r-1)|\partial H|$ since for each  ordered $(r-1)$ set $e-\{x_k\} \in \partial H$ there must be a unique $x_k$ satisfying $x_{r+k-1}<x_k<x_{k+1}$ such that $e \in L$ (the vertex closest to $x_{k+1}$).
	Our goal is to prove that
	$|T_k(H)| \geq |T_{k-1}(H) \backslash L|$ via an injection. Then, using the fact that $|L|\le (r-1)|\partial H|$ and the induction hypothesis, we have
	\[ |T_k(H)| \geq |T_{k-1}(H) \backslash L| \geq r \cdot e(H) - (k - 2)(r - 1) \cdot |\partial H| - |L| \geq r \cdot e(H) - (k - 1)(r - 1) \cdot |\partial H|.\]
	
	We must give an injection $f : T_{k-1}(H) \backslash L \rightarrow
	T_k(H)$. Suppose that $e= v_{r+1}< \cdots < v_{r+k-1}<v_k<\cdots <v_r \in T_{k-1}(H) \backslash L$.
	Then  there exists a vertex $x$ such that  $v_k<x<v_{k+1}$ and
	$e-\{v_k\} \cup \{x\} \in E(H)$. Let $A$ be the set of all such vertices $x$.
	Consider the vertex $y \in A$ such that $y \le x$ for all $x \in A$. In other words, $y$ is the closest vertex to $v_k$ among all vertices of $A$. Let $f(e)=e-\{v_k\} \cup \{y\}$. Since $k \le r$, we clearly have $f(e) \in T_k(H)$ as we obtain a $P_k^r$ that ends in $f(e)$ by taking the copy of $P_{k-1}^r$ that ends in $e$ and just adding the edge $f(e)$. Moreover, $f$ is an injection, as if
	there is an $e'=e-\{v_k\} \cup \{y'\}$ such that $f(e')=f(e)$, then, assuming 
	that $v_k<y'<y$, $y$ would not have been the closest vertex to $v_k$ in $A$. This contradiction shows that $f$ is indeed an injection and the proof is complete.
\end{proof}

\subsection{Lower bound in Theorem~\ref{maincrossing_short} for $r \ge k \ge 2$}

Our next goal is to prove the following lower bound in Theorem~\ref{maincrossing_short} for $r \ge k \ge 2$:

\begin{equation} \label{logr}
 \ex_{\circlearrowright}(n,P_k^r) \geq (1+o(1))\frac{k-1}{3 \ln 2r}{n\choose r-1}. 
\end{equation}

A {\em gap} of an $r$-element subset $R$ of $\vb{\Omega}_n$ is a segment of $\vb{\Omega}_n$ between two clockwise
consecutive vertices of $R$. We say $R$ has {\em $(k,m)$-gaps} if some $k - 1$ consecutive gaps of $R$ all have length
more than $m$ -- in other words, there are at least $m$ vertices of $\vb{\Omega}_n$ in each gap.
For $n>r$, let $K_n^r$  be the family of all $r$-element subsets of $\vb{\Omega}_n$.
For $n>r\geq k$, let  $H(n,r,k,m)$ be the family of the members of $K_n^r$
 that have $(k,m)$-gaps, and  $\overline{H}(n,r,k,m)$ be the family of the members of $K_n^r$
 that do not have $(k,m)$-gaps.

 For a hypergraph $H$ and $v\in V(H)$, let $H\{v\}$ denote the set of edges of $H$ containing $v$.

\begin{lem} If
	\begin{equation}\label{jf1}
m\geq \frac{(n-1)\ln 2r}{(r-1)(k-1)},
 \end{equation}
then	
\begin{equation}\label{jf2}
|H(n,r,k,m)|\leq \frac{1}{2} {n\choose r}. \quad\mbox{ Equivalently,}\; |\overline{H}(n,r,k,m)|\geq \frac{1}{2} {n\choose r}.
 \end{equation}
	\end{lem}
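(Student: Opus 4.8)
The plan is to prove the equivalent statement $|H(n,r,k,m)| \le \tfrac12\binom{n}{r}$ by a double-counting (union-bound) argument, charging each bad set to a distinguished vertex that witnesses its run of large gaps. To each $R \in H(n,r,k,m)$ I associate a pair $(R,s)$, where $s$ is a vertex of $R$ immediately clockwise-preceding a block of $k-1$ consecutive gaps each containing at least $m$ vertices of $\vb{\Omega}_n$. Since every member of $H(n,r,k,m)$ has $(k,m)$-gaps, at least one such $s$ exists, so $|H(n,r,k,m)|$ is at most the number $N$ of pairs $(R,s)$ with $s \in R$ for which the $k-1$ gaps of $R$ immediately following $s$ in clockwise order each have length $\ge m$. (Overcounting here only helps, so I do not need $s$ to be unique.)

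Next I would compute $N$ exactly by stars and bars. By rotational symmetry the count is the same for each of the $n$ positions of $s$, so I fix $s$ at one vertex and count placements of the remaining $r-1$ vertices. Writing the clockwise gaps of $R$ starting from $s$ as $g_1,\dots,g_r \ge 0$ with $\sum_i g_i = n-r$, the constraint is $g_1,\dots,g_{k-1}\ge m$; substituting $g_i \mapsto g_i - m$ for $i \le k-1$ turns this into an unconstrained composition of $n-r-(k-1)m$ into $r$ nonnegative parts, giving $\binom{n-1-(k-1)m}{r-1}$ solutions. Hence $N = n\binom{n-1-(k-1)m}{r-1}$, so $|H(n,r,k,m)| \le n\binom{n-1-(k-1)m}{r-1}$. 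If $n-1-(k-1)m < r-1$ this binomial vanishes and there is nothing to prove, so I may assume it is nonnegative.

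Finally I would reduce the target inequality to the hypothesis on $m$. Using the identity $\binom{n}{r} = \tfrac{n}{r}\binom{n-1}{r-1}$, the desired bound $|H| \le \tfrac12\binom{n}{r}$ is equivalent to $\binom{n-1-(k-1)m}{r-1}\big/\binom{n-1}{r-1} \le \tfrac{1}{2r}$. Expanding the ratio as $\prod_{j=0}^{r-2}\bigl(1 - \tfrac{(k-1)m}{n-1-j}\bigr)$ and bounding each factor by the largest one (the $j=0$ term), I obtain $\le \bigl(1 - \tfrac{(k-1)m}{n-1}\bigr)^{r-1} \le \exp\!\bigl(-\tfrac{(k-1)(r-1)m}{n-1}\bigr)$ via $1-x\le e^{-x}$. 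This last quantity is $\le \tfrac{1}{2r}$ exactly when $\tfrac{(k-1)(r-1)m}{n-1} \ge \ln 2r$, i.e.\ precisely when $m \ge \tfrac{(n-1)\ln 2r}{(r-1)(k-1)}$, which is hypothesis (\ref{jf1}).

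The genuinely delicate step is the bookkeeping of the union bound: making sure the cyclic adjacency of the gaps is handled correctly through the pointing device and that the stars-and-bars count after the shift $g_i \mapsto g_i - m$ is exact. The remaining estimates (the telescoping product and $1-x\le e^{-x}$) are routine, and the point of the argument is that the threshold in (\ref{jf1}) is calibrated so that the exponential bound lands exactly on $\tfrac{1}{2r}$, yielding the factor $\tfrac12$ after multiplying by $n$ and using $\binom{n}{r}=\tfrac nr\binom{n-1}{r-1}$.
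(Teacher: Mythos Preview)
Your proof is correct and follows essentially the same route as the paper's. The paper phrases the count via the link at a fixed vertex together with a gap-compression map, obtaining $|H(n,r,k,m)\{j\}| \le r\binom{n-1-(k-1)m}{r-1}$ and then multiplying by $n/r$; your pointed double-count arrives at the identical intermediate bound $|H|\le n\binom{n-1-(k-1)m}{r-1}$ and uses the same final estimate via $1-x\le e^{-x}$.
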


\begin{proof} Instead of proving~(\ref{jf2}) directly, it will be easier to prove that
\begin{equation}\label{jf3}
\mbox{\em for every  $j\in \vb{\Omega}_n$,}\quad |H(n,r,k,m)\{j\}|\leq \frac{1}{2} |K^r_n\{j\}|=  \frac{1}{2}{n-1\choose r-1};
 \end{equation}
and~(\ref{jf3}) implies~(\ref{jf2}) because  $|H(n,r,k,m)|=\frac{n}{r}|H(n,r,k,m)\{j\}|$ and ${n\choose r}=\frac{n}{r} |K^r_n\{j\}|$.

Recall the vertex set of $\vb{\Omega}$ is $\{0,1,2,\dots,n-1\}$.
By symmetry, it is enough to prove~(\ref{jf3}) for $j=n-1$. First, we show that
\begin{equation}\label{jf4}
|H(n,r,k,m)\{n-1\}|\leq r  |K^r_{n-(k-1)m}\{n-1-(k-1)m\}|.
 \end{equation}
Indeed, from each $F\in H(n,r,k,m)\{n-1\}$, we can get an $F'\in K^r_{n-(k-1)m}\{n-1-(k-1)m\}$ by
deleting the first $m$ vertices in $k-1$ consecutive gaps of length at least $m+1$, and renumbering the remaining $n-(k-1)m$ vertices so that the vertex $n-1$  of $\vb{\Omega}$ will be
$(n-1)-(k-1)m$. On the other hand, each $F'\in K^r_{n-(k-1)m}\{n-1-(k-1)m\}$ can be obtained this way from $r$ distinct $F\in H(n,r,k,m)\{n-1\}$.
This proves~(\ref{jf4}).

Now, using $1 - x \leq e^{-x}$,~(\ref{jf4}) and~(\ref{jf1})  yield
$$|H(n,r,k,m)\{n-1\}|\leq r {n-1-(k-1)m\choose r-1}= r {n-1\choose r-1} \prod_{i = 1}^{r-1} \frac{n - (k - 1)m - i}{n - i}
$$
$$
\leq r {n-1\choose r-1}\exp\Bigl(-\frac{(k - 1)m(r-1)}{n -1 }\Bigr) \leq  r {n-1\choose r-1}\frac{1}{2r},
$$
yielding~(\ref{jf3}).
\end{proof}

We are ready to prove~(\ref{logr}).     Let
$$t=t(r,k)=\left\lceil \frac{(r-1)(k-1)}{\ln 2r}\right\rceil.$$

Suppose  $n>r\geq k\geq 2$. If $r=2$, then $k=2$, and the bound is trivial; so let $r\geq 3$. Suppose first that $t$ divides $n$ and let
$m=n/t$. Then
 $m$ satisfies~(\ref{jf1}).
By rotating $\vb{\Omega}$ we find a subgraph $H'$ of $\overline{H}(n,r,k,m)$  with at least $|\overline{H}(n,r,k,m)|/m$ edges
such that  every edge of $H'$ adds up to zero modulo $m$. We claim that
\begin{equation}\label{jf5}
\mbox{\em $H'$ does not contain crossing $P_k^r$.}
 \end{equation}
Indeed, assume $H'$ contains a  crossing $P_k^r$  with the vertices  $v_0,v_1,\dots,v_{k+r-2}$. By the definition
of crossing paths, $v_0 < v_r < v_1 < v_{1+r} < \dots < v_{k-1} < v_{k -1+ r} < v_k$.  Since the set $\{v_1,v_2,\dots,v_{r-1}\}$ forms an edge together
with both $v_0$ and $v_r$,  $v_r \equiv v_0 \mod m$. Similarly, $v_{r + i} \equiv v_i \mod m$ for all $i < k$. But this means that  the edge
$\{v_0,v_1,\dots,v_{r-1}\}$ has
$k - 1$ consecutive gaps of length
more than $m$, thus it does not belong to $\overline{H}(n,r,k,m)$. This contradiction proves~(\ref{jf5}).

\medskip
Thus if $r\geq 3$, $2\leq k\leq r$ are fixed, $n$ is a large number divisible by $t$ and $m=n/t$, then
by~(\ref{jf5}) and~(\ref{jf2}), $H'$ is a cg $r$-graph not containing crossing $P_k^r$ with
$$|H'|\geq \frac{1}{2m}{n\choose r}\geq \frac{t}{2r }{n-1\choose r-1}\geq  \frac{(k-1)(r-1)}{2r \ln 2r}{n-1\choose r-1}
\geq  (1+o(1))\frac{k-1}{3 \ln 2r}{n\choose r-1}
.
$$
If $n$ is not divisible by $t$, then let $n'$ be the largest positive integer divisible by $t$ such that $n'\leq n$. Then
$$ \ex_{\circlearrowright}(n,P_k^r) \geq \ex_{\circlearrowright}(n',P_k^r) \geq  (1+o(1))\frac{k-1}{3 \ln 2r}{n'\choose r-1}=
 (1+o(1))\frac{k-1}{3 \ln 2r}{n\choose r-1}.\qed
$$

\subsection{The case $k = 2$} 

Here we prove the upper bound (\ref{eq61}), namely:
\begin{equation*}
    \ex_{\circlearrowright}(n,P_2^r) \le  \frac{1}{2}{n \choose r-1}.  
\end{equation*}
Recall that $P_2^r$ on $\vb{\Omega}_n$ has a vertex set
$$v_1<v_{r+1}< v_2<v_{3}<\cdots < v_r \enskip (< v_1), $$
and edges $\{ v_1, \dots, , v_r\}$ and $\{ v_2, \dots, v_{r+1}\}$.
Consider a $P_2^r$-free cgh $H$ on the vertex set  $\vb{\Omega}_n$.
Label the vertices of an $e\in H$ as
$$1\leq a_1< a_2<\cdots < a_r  \leq n, $$
and define $T_1(e):= e\setminus \{ a_1\}$ and $T_2(e):= e\setminus \{ a_r\}$.
Since $H$ is  $P_2^r$-free, we have $T_\alpha(e)\neq T_\alpha(e')$ for $e\neq e'\in H$ (and $\alpha=1,2$).
Indeed, if we  take (in case of $\alpha=1$) $v_2, \dots, v_r= a_2, \dots, a_r$ and $\{ v_1, v_{r+1}\}= \{ a_1, a_1'\}$ then we obtain a $P_2^r$.

We also have $T_1(e)\neq T_2(e')$, otherwise we define  $\{ v_1, v_{r+1}\}= \{ a_1, a_r'\}$ and again obtain a forbidden path.
This way we associated two $(r-1)$-sets to each member of $H$, yielding~\eqref{eq61}. \qed

\subsection{The case $k = r$} 

Here we prove (\ref{eq62}), namely:
\begin{equation*}
  \ex_{\cir}(n, P_r^r) >  (1-o(1))(r-2){n \choose r-1}.
   \end{equation*}
Recall that $P_r^r$ on $\vb{\Omega}_n$ has a vertex set
\begin{equation}\label{eq63}
 v_1<v_{r+1}< v_2<v_{r+2}<  v_{3}<\cdots < v_{r-1}< v_{2r-1}< v_r \enskip (< v_1),
   \end{equation}
and edges $e_1,\ldots,e_r$, where for $i=1,\ldots,r$, 
  $e_i=\{ v_i,    v_{i+1},\dots    , v_{r+i-1}\}$. 
By~(\ref{eq63}),
\begin{equation}\label{m27}
\mbox{\em for every $1\leq i\leq r$, the only vertices in $e_i$ that can be consecutive on $\vb{\Omega}_n$ are  $v_{i+r-1}$ and $v_i$.}
   \end{equation}

Recall that  the $n$ vertices of $\vb{\Omega}_n$ are arranged in clockwise order as $1<2<3< \dots<n$.
Let  $H$ be the following family of $r$-sets  of $\vb{\Omega}_n$.
Label the vertices of an $e\in H$ as
\begin{equation}\label{m28}
1 < a_1< a_2<\cdots < a_r  < n, 
  \end{equation}
and put $e$ into $H$ if 
 there exists $2\leq i\leq r-1$ with $a_{i-1}+1=a_{i}$.
The number of such $e\in H$ is asymptotically  $(r-2){n \choose r-1}+O(n^{r-2})$.

\medskip

We claim that $H$ does not contain a $P_r^r$.
Suppose, on the contrary, that $F\subset H$ is a copy of $P_r^r$ as it is described in~\eqref{eq63}.
Choose  $i\in [r-1]$ such that the largest  number  in $\{v_1,\ldots,v_{2r-1}\}$ is either $v_i$ or $v_{r+i-1}$.
Consider $e_i$ in the form $(a_1,\ldots,a_r)$ as in~(\ref{m28}). 
 Since $e_i=\{ v_i,    v_{i+1},\dots    , v_{r+i-1}\}$, by the choice of $i$,  $v_{i+r-1}\in \{a_{r-1},a_{r}\}$.
This together with~(\ref{m27}) contradicts the definition of $H$.
\qed

\section{Proof of Theorem \ref{cgh-cmatching}}\label{pf:cmatching}

We are to show that for  $k,r > 2$,
\[
(k-1)r {n \choose r - 1} - O(n^{r - 2}) \leq \ex_{\circlearrowright}(n,M_k^r) = \ex_{\to}(n,M_k^r)< 2(k - 1)(r-1){n \choose r - 1}.
\]

A simple construction demonstrating the lower bound in Theorem \ref{cgh-cmatching} is the following cgh :
let $A$ be the set of $r$-gons  that contain at least one vertex from a fixed set of $k-1$ vertices of a convex $n$-gon, and
let $B$ be the set of $r$-gons that have a side of length at most $k-1$. The cgh $A \cup B$ has $(k - 1)r{n \choose r - 1} + O(n^{r - 2})$ edges
and does not contain $M_k^r$.

For the upper bound, let $H$ be a largest $r$-uniform $n$-vertex family of sets with vertices on a convex polygon  of $n$ points with no $M_{k}^r$. For each edge $A$, choose a shortest chord $ch(A)$, say $v_rv_1$ and view the vertices of $A$ as $v_1,v_2,\ldots,v_r$ in clockwise order. Define
the {\em type of $A$} to be  the vector ${\bf t}(A)=(t_1,\ldots,t_{r-1})$ where
$$\mbox{$t_i=v_{i+1}-v_i$ for $i=1,\ldots,r-2$ and
	$t_{r-1}=n-(t_1+\ldots+t_{r-2})=v_1-v_{r-1}$.}$$
The coordinates of each vector ${\bf t}(A)$ are positive integers, $t_{r-1}(A)\geq 2$, and $t_1(A)+\ldots+t_{r-1}(A)=n$
 for each $A$ by definition.
The number of such vectors is {\em exactly} $\binom{n-2}{r-2}$ (because this is the number of ways to mark $r-2$
out of the $n-1$ separators in an ordered set
of $n$ dots so that the last separator is not marked).  For every given type ${\bf t}=(t_1,\ldots,t_{r-1})$, the family $H({\bf t})$ of the chords $ch(A)$ of the edges $A$ of type ${\bf t}$ does not contain
$k$ crossing chords. Thus by Theorem \ref{cgg-cmatching}, $|H({\bf t})|< 2(k-1)n$.
Hence, using $r\geq 3$,
$$|H|<2(k-1)n\binom{n-2}{r-2}= 2(k-1)\frac{(r-1)(n-r+1)}{n-1}\binom{n}{r-1}<2(k - 1)(r-1){n \choose r - 1},
$$
as claimed. \qed

\section{Concluding remarks}

$\bullet$ A hypergraph $F$ is a {\em forest} if there is an ordering of the edges $e_1,e_2,\dots,e_t$ of $F$ such that for all $i \in \{2,3,\dots,t\}$, there exists $h < i$ such that $e_i \cap \bigcup_{j < i} e_j \subseteq e_h$. It is not hard to show that $\ex(n,F) = O(n^{r - 1})$
 for each $r$-uniform forest $F$. It is therefore natural to extend the Pach-Tardos Conjecture~\ref{ptc} to $r$-graphs as follows:

\begin{conjecture}\label{main}
Let $r \geq 2$. Then for any ordered $r$-uniform forest $F$ with interval chromatic number $r$, $\ex_{\rightarrow}(n,F) = O(n^{r-1} \cdot \mbox{\rm polylog} \, n)$.
\end{conjecture}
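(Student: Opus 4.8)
The plan is to reduce to the interval-chromatic-number-$r$ setting and then to mimic, at the level of the shadow structure, the approach used for ordered graphs ($r=2$). First I would invoke the generalized Pach--Tardos decomposition packaged in Theorem~\ref{splitting1}: for a forest $F$ one has $z_\to(n,F)=\Omega(n^{r-1})$ by a routine interval-chromatic-$r$ construction whenever $F$ has at least two edges (and below that threshold the claimed upper bound is immediate), so we sit in the regime $c=r-1$ and obtain $\ex_\to(n,F)=O(z_\to(n,F)\log n)$. It therefore suffices to prove the conjecture for $r$-graphs of interval chromatic number $r$, namely
\[ z_\to(n,F)=O\!\left(n^{r-1}\,\mbox{\rm polylog}\,n\right). \]
This step is essentially free given Theorem~\ref{splitting1}; the entire difficulty is now concentrated in bounding $z_\to$.

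For the bound on $z_\to$ I would induct on the number $t$ of edges of $F$, using the forest ordering $e_1,\dots,e_t$ with $e_i\cap\bigcup_{j<i}e_j\subseteq e_h$ for some $h<i$. Writing $F'=F-e_t$ and assuming $z_\to(n,F')=O(n^{r-1}(\log n)^{s})$, I would, given a dense interval-chromatic-$r$ graph $H$ avoiding $F$, either locate a copy of $F'$ whose host vertices admit the prescribed extension by $e_t$, or pass to a substructure that is already $F'$-free. The device for making this run is a dyadic decomposition of each part in the ordering $X_1<\dots<X_r$ into $O(\log n)$ intervals: the interval-chromatic structure together with the fixed vertex-order inside $F$ forces any embedding of $F$ to be consistent with a single choice of interval-profile, and summing the per-profile counts over the $O((\log n)^{r-1})$ profiles is what should produce the polylog factors rather than a genuine extra power of $n$. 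For $r\ge 3$ I would combine this with Theorem~\ref{splitter} to restrict attention to split subgraphs, confining the two ``crowded'' coordinates that cause trouble to a single part.

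The hard part is exactly the obstruction already present at $r=2$: the case $r=2$ of the statement is verbatim the Pach--Tardos Conjecture~\ref{ptc}, which is open, and for which Kor\'andi--Tardos--Tomon--Weidert~\cite{KTTW} obtain only the weaker bound $n^{1+o(1)}$, and only for a restricted family of forests. Consequently no complete proof of the general statement can be expected without resolving Conjecture~\ref{ptc}, and any honest attempt must either lift the \cite{KTTW} machinery through the interval-chromatic-$r$ reduction above to get $z_\to(n,F)=O(n^{r-1}\,\mbox{\rm polylog}\,n)$ for the same restricted families, or settle for the weaker universal bound $z_\to(n,F)=n^{r-1+o(1)}$, whence $\ex_\to(n,F)=n^{r-1+o(1)}$. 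Upgrading the $o(1)$ in the exponent to a fixed power of $\log n$ — the gap between $n^{r-1+o(1)}$ and $n^{r-1}\,\mbox{\rm polylog}\,n$ — is precisely the crux, and it is the genuine difficulty, inherited unchanged from the graph case.
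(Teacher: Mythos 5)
You have set out to prove something the paper does not prove: the statement is Conjecture~\ref{main}, stated as an open problem. Its $r=2$ case is verbatim the Pach--Tardos Conjecture~\ref{ptc}, and the paper proves it only for the special family of crossing paths (Theorem~\ref{cpthm}), noting also that the $\log n$ factor is genuinely necessary there. So there is no paper proof to compare against, and your proposal is correctly calibrated on the essential point: you identify the reduction via Theorem~\ref{splitting1} to the interval-chromatic-number-$r$ setting (this is exactly the remark the paper itself makes immediately after stating the conjecture), and you honestly concede that the remaining bound on $z_\to(n,F)$ is the open crux, inherited unchanged from the graph case where even \cite{KTTW} obtain only $n^{1+o(1)}$ for a restricted class of forests.

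That said, do not mistake your middle paragraph for a proof sketch that merely needs polishing. The claim that the interval-chromatic structure ``forces any embedding of $F$ to be consistent with a single choice of interval-profile'' under a dyadic decomposition is precisely where such arguments collapse: edges of $F$ may straddle dyadic intervals at incomparable scales, the per-profile subproblems are not instances of the same extremal problem for a smaller forest (the vertex order inside a profile imposes new order constraints), and iterating the decomposition multiplies losses rather than summing them --- this is, in essence, why Conjecture~\ref{ptc} has resisted proof. Two smaller points: the hypothesis $z_\to(n,F)=\Omega(n^{r-1})$ needed to invoke Theorem~\ref{splitting1} in the $c=r-1$ regime is not automatic and should be checked for the given forest $F$ (it is plausible via a star-type construction, but ``routine'' overstates it); and Theorem~\ref{splitter} yields a split subgraph of comparable \emph{normalized density} $d(G)\ge c\,d(H)$ on a possibly much smaller vertex set, so ``confining the crowded coordinates to one part'' requires tracking how $v(G)$ shrinks, as the paper does in the proof of Proposition~\ref{prop}. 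In short: your reduction matches the paper's stated strategy, your assessment of the obstruction is accurate, and the absence of a complete argument is not a flaw of your attempt but the current state of the problem.
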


Theorem \ref{splitting1} shows that to prove Conjecture \ref{main}, it is enough to consider the setting of $r$-graphs of interval chromatic number $r$. Theorem \ref{cpthm}
verifies this conjecture for crossing paths, and also shows that the $\log n$ factor in Theorem \ref{splitting1} is necessary. It would be interesting to find other general classes of ordered $r$-uniform forests for $r \geq 3$ for which Conjecture \ref{main} can be proved. A related problem is to determine for which ordered forests $F$ we have $\ex_{\to}(n, F)= O(n^{r-1})$? This is a hypergraph generalization of Bra\ss' question~\cite{Brass} which was solved recently for trees~\cite{FKMV}.
\medskip

$\bullet$ It appears to be substantially more difficult to determine the exact value of the extremal function for $r$-uniform crossing $k$-paths in the convex geometric setting than in the ordered setting. It is possible to show that for $k \leq 2r - 1$,
\[ c(k,r) = \lim_{n \rightarrow \infty} \frac{\ex_{\cir}(n,P_k^r)}{{n \choose r-1}}\]
exists. We do not as yet know the value of $c(k,r)$ for any pair $(k,r)$ with $2 \leq k \leq r$, even though in the ordered
setting Theorem \ref{cpthm} captures the exact value of the extremal function for all $k\leq r+1$, and $c(r+1,r) = r$.

\medskip

$\bullet$ One can consider more general orderings of tight paths, namely instead of the vertices whose subscripts are congruent to $a$ modulo $r$ increasing within an interval (conditions (i), (ii), (iii) in Definition~\ref{defCP}), we can specify which congruence classes of vertices are increasing  within their interval and which are decreasing. Our methods can handle such situations as well.

\paragraph{Acknowledgement.}

This research was partly conducted  during AIM SQuaRes (Structured Quartet Research Ensembles) workshops, and we gratefully acknowledge the support of AIM.

{\small

\begin{tabular}{ll}
\begin{tabular}{l}
{\sc Zolt\'an F\" uredi} \\
Alfr\' ed R\' enyi Institute of Mathematics \\
Hungarian Academy of Sciences \\
Re\'{a}ltanoda utca 13-15 \\
H-1053, Budapest, Hungary \\
E-mail:  \texttt{zfuredi@gmail.com}.
\end{tabular}
& \begin{tabular}{l}
{\sc Tao Jiang} \\
Department of Mathematics \\ Miami University \\ Oxford, OH 45056, USA. \\ E-mail: \texttt{jiangt@miamioh.edu}. \\
$\mbox{ }$
\end{tabular} \\ \\
\begin{tabular}{l}
{\sc Alexandr Kostochka} \\
University of Illinois at Urbana--Champaign \\
Urbana, IL 61801 \\
and Sobolev Institute of Mathematics \\
Novosibirsk 630090, Russia. \\
E-mail: \texttt {kostochk@math.uiuc.edu}.
\end{tabular} & \begin{tabular}{l}
{\sc Dhruv Mubayi} \\
Department of Mathematics, Statistics \\
and Computer Science \\
University of Illinois at Chicago \\
Chicago, IL 60607. \\
\texttt{E-mail: mubayi@uic.edu}.
\end{tabular} \\ \\
\begin{tabular}{l}
{\sc Jacques Verstra\"ete} \\
Department of Mathematics \\
University of California at San Diego \\
9500 Gilman Drive, La Jolla, California 92093-0112, USA. \\
E-mail: {\tt jverstra@math.ucsd.edu.}
\end{tabular}
\end{tabular}
}

\end{document}